\def\Z{{\mathbb Z}}
\def\C{{\mathbb C}}
\def\R{{\mathbb R}}
\def\acts{\triangleright}
\def\T{{\mathbb T}}
\def\CA{{\mathcal A}}
\def\CB{{\mathcal B}}
\def\CH{{\mathcal H}}
\def\beps{\boldsymbol{\epsilon}}
\DeclareMathSymbol\crossrt{\mathrel}{AMSb}{"6E}
\DeclareMathSymbol\crosslt{\mathrel}{AMSb}{"6F}
\def\lcross{\crosslt}
\def\id{{\mathrm i}{\mathrm d}}
\def\oh{\frac{1}{2}}
\def\acts{\triangleright}
\newtheorem{lemma}{Lemma}[section]
\newtheorem{theorem}[lemma]{Theorem}
\newtheorem{definition}[lemma]{Definition}
\newtheorem{remark}[lemma]{Remark}
\def\mm#1#2#3#4{\left( \begin{array}{cc} #1 & #2 \\ #3 & #4 \end{array} \right)}
\title[Spectral Triples over Bieberbach]{Real Spectral Triples over Noncommutative Bieberbach Manifolds}
\author[P.Olczykowski]{Piotr Olczykowski$^{\ast}$}
\email{piotr.olczykowski@uj.edu.pl}
\address{Copernicus Center for Interdisciplinary Studies, S\l{}awkowska 17, 31-016 Krak\'ow, Poland}
\author[A.Sitarz]{Andrzej Sitarz$^{\dagger,\ddagger}$}
\email{andrzej.sitarz@uj.edu.pl}
\address{Institute of Physics, Jagiellonian University,
Reymonta 4, 30-059 Krak\'ow, Poland}
\thanks{${}^\dagger$Most of this work was carried out at Institute of Mathematics of the
Polish Academy of Sciences, \'Sniadeckich 8, Warszawa, 00-950 Poland}
\thanks{${}^\ddagger$Partially supported NCN grant 2011/01/B/ST1/06474}
\thanks{$^{\ast}$Supported by the grant from The John Templeton Foundation.}
\subjclass[2010]{58B34,46L87}
\begin{document}
%%%%%%%%%%%%%%%%%%%%%%%%%%%%%%%%%%%%%%%%%%%%%%%%%%%%
\maketitle
%%%%%%%%%%%%%%%%%%%%%%%%%%%%%%%%%%%%%%%%%%%%%%%%%%%%
\begin{abstract}
We classify and construct all real spectral triples over noncommutative Bieberbach 
manifolds, which are restrictions of irreducible real equivariant spectral triple over 
the noncommutative three-torus. We show, that in the classical case, the constructed
geometries correspond exactly to spin structures over Bieberbach manifolds and
the Dirac operators constructed for a flat metric.
\end{abstract}
\maketitle

\section{Introduction}

Bieberbach manifolds are compact manifolds, which are quotients of the Euclidean space by 
a free, properly discontinuous and isometric action of a discrete group. The torus is the canonical
example of a Bieberbach manifold, however, the first nontrivial example appears in dimension 
$2$ and is a Klein bottle. The case $d=3$ has already been described in the seminal works of 
Bieberbach \cite{Bieb1,Bieb2}. In this paper we work with the dual picture, looking at the suitable algebra
of functions on the Bieberbach manifold (and their noncommutative counterparts) in terms of fixed point 
subalgebra of the relevant dense subalgebra of the $C^\ast$ algebra of continuous functions on the 
three-torus and its corresponding noncommutative deformation ${\mathcal A}(\T^3_\theta)$. 

\subsection{Noncommutative Bieberbach Manifolds}

In this section we shall briefly recall the description of three-dimensional noncommutative Bieberbach manifolds as quotients of the 
three-dimensional noncommutative tori by the action of a finite discrete group. For details we refer to \cite{OlSi2}, here we present 
the notation and the results. Out of 10 different Bieberbach three-dimensional manifolds, (six orientable, including the three-torus 
itself and four nonorientable ones) only six have noncommutative counterparts. 

\begin{definition}[see \cite{OlSi2}]
Let $\CA(\T^3_\theta)$ be an algebra of smooth elements on a three-dimensional noncommutative torus, 
which contains the polynomial algebra generated by three unitaries $U,V,W$ satisfying relations,
$$ UV=VU, \quad UW=WU, \quad WV=e^{2\pi i\theta}VW, $$
where $0 < \theta < 1$ is irrational. We define the algebras of noncommutative Bieberbach manifolds 
as the fixed point algebras of the following actions of finite groups $G$ on $\CA(\T^3_\theta)$, 
which are combined in the table \ref{table1}. 
%%%%%%%%%%%%%%%%%%%%%%%%%%%%%%%%%%%%%%%%%%%%
\begin{table}[here]
\centering
\begin{tabular}{|c|c|c|c|l|}
\hline
name & $\Z_N$ & $h^N \!=\! \id$ & action of $\Z_N$ on $U,V,W$  \\ \hline \hline
$\mathrm{B2}_\theta$& $\Z_2$ & h & $h \acts U = -U$, $h \acts V = V^*$, $h \acts W = W^*$, \\ \hline
$\mathrm{B3}_\theta$& $\Z_3$ & h & $h \acts U =  e^{\frac{2}{3} \pi i} U$, $h \acts V = e^{-\pi i \theta} V^* W$, 
                   $h \acts W = V^*$, \\ \hline
$\mathrm{B4}_\theta$& $\Z_4$ & h & $h \acts U =  i U$, $h \acts V = W$, $h \acts W = V^*$, \\ \hline
$\mathrm{B6}_\theta$& $\Z_6$ & h & $h \acts U =  e^{\frac{1}{3}\pi i} U$, $h \acts V = W$ , $h \acts W = e^{-\pi i \theta} V^*W$, \\ \hline
$\mathrm{N1}_\theta$& $\Z_2$ & h & $h \acts U = U^*$, $h \acts V = -V$, $h \acts W = W$, \\ \hline
$\mathrm{N2}_\theta$& $\Z_2$ & h & $h \acts U = U^*$, $h \acts V = -V $, $h \acts W = W U^*$, \\  \hline
\end{tabular}
\caption{The action of finite cyclic groups on $A(T^3_\theta)$}
\label{table1}
\end{table}
\end{definition}

We have shown in \cite{OlSi2} that the actions of the cyclic groups $\Z_N$, $N=2,3,4,6$ on the noncommutative  
three-torus, as given in the table \ref{table1} is free. The aim of this paper is 
to study and classify flat (i.e. restricted from flat geometries of the torus $\CA(\T^3_\theta)$)
real spectral geometries over the orientable noncommutative Bieberbachs.
%%%%%%%%%%%%%%%%%%%%%%%%%%%%%%%%%%%%%%%%%%%%%%
%%%%%%%%%%%%%%%%%%%%%%%%%%%%%%%%%%%%%%%%%%%%%%
\section{Spectral triples over Bieberbachs}

Since each noncommutative Bieberbach algebra is a subalgebra of the noncommutative torus, 
a restriction of the spectral triple over the latter to the subalgebra, gives a generic spectral triple 
over a noncomutative Bieberbach manifold, which might be, however, reducible. By restriction
of a spectral triple $(\CA,\pi,\CH,D,J)$ to a subalgebra, $\CB \subset \CA$, we understand the 
triple $(\CB,\pi,\CH',D',J')$ where $\pi'$ is the restriction of $\pi$ to $\CB$, $\CH' \subset \CH$ is 
a subspace invariant under the action of $\CB$,$D$ and $J$, so that $D',J'$ are their restrictions
to $\CH'$ (note that in the even case this must apple also to $\gamma$). 

In what follows we shall show that, in fact, each spectral triple over Bieberbach is a restriction of a spectral triple over the torus, first showing that each spectral triple over Bieberbach can be lifted 
to a noncommutative torus.

\subsection{The lift and the restriction of spectral triples}

\begin{lemma} 
Let $(\mathrm{B}N_\Theta,\CH,J,D )$ be a real spectral triple over a noncommutative 
Bieberbach manifold $\mathrm{B}N_\theta$. Then, there exists a spectral triple over three-torus, 
such that this triple is its restriction.
\end{lemma}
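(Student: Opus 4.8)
The plan is to reconstruct the torus spectral triple from the Bieberbach one by an induction-type construction along the finite group $G=\Z_N$. Write $\CA=\CA(\T^3_\theta)$ and $\CB=\mathrm{B}N_\theta=\CA^G$. The starting observation is that $\CA$ is a free $\CB$-module: the grading of $\CA$ by characters of $\Z_N$ (via the action $h\acts$) decomposes $\CA=\bigoplus_{k=0}^{N-1}\CA_k$ with $\CA_0=\CB$, and because the action is free each $\CA_k$ is a rank-one projective (indeed, using the generator $U$ which transforms by a primitive $N$-th root of unity in every row of Table \ref{table1}, one has $\CA_k=U^k\CB=\CB U^k$). Hence $\CA\cong\CB\ts_{\C}\C[\Z_N]$ as a $\CB$-bimodule, and the torus Hilbert space should be recovered as $\widetilde{\CH}=\CH\ts\C^N$, or equivalently as an induced module $\CA\ts_\CB\CH$.

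The key steps, in order, are the following. First, I would exhibit on $\widetilde{\CH}=\CH\ts\C^N$ a representation $\widetilde\pi$ of $\CA$ extending $N$ copies of $\pi$: elements of $\CB$ act diagonally as $\pi(b)\ts 1$, while the extra generator (the unitary $U$ up to the scalar twist) acts by $\pi_\CB$-equivariance combined with the cyclic shift on $\C^N$, so that the defining commutation relations with $V,W$ of Definition 1 are satisfied; here one uses that $U\acts$-conjugation implements $h$ on $\CB$. Second, I would define $\widetilde D=D\ts 1$ — this is legitimate precisely because $D$ commutes with the $G$-action (that is what makes $D$ descend to $\CB$ in the first place, so it is $G$-equivariant on $\CH$ with $\CH$ sitting inside $\widetilde\CH$), and boundedness of $[\widetilde D,\widetilde\pi(a)]$ for the new generator reduces to boundedness of $[D,\pi(u^{-1}hu)]$-type commutators which are controlled by the corresponding bounded commutators over $\CB$. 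Third, I would set $\widetilde J=J\ts j$ where $j$ is an antilinear involution on $\C^N$ chosen so that $\widetilde J\widetilde\pi(a)\widetilde J^{-1}$ lands in the commutant of $\widetilde\pi(\CA)$; the first-order condition and the sign table $(\epsilon,\epsilon',\epsilon'')$ are inherited from $(\CB,\CH,D,J)$ since $D\ts1$ and the diagonal/shift structure respect them. Finally, I would verify that the original triple is the restriction of $(\CA,\widetilde\pi,\widetilde\CH,\widetilde D,\widetilde J)$ to $\CB$: the subspace $\CH\ts\C e_0\subset\widetilde\CH$ (the copy on which $U$ acts without twisting back) is invariant under $\widetilde\pi(\CB)$, $\widetilde D$ and $\widetilde J$, and the restriction is manifestly the given $(\CB,\pi,\CH,D,J)$.

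The main obstacle I expect is the compatibility of the real structure with the shift: one must choose the antilinear $j$ on $\C^N$ so that $\widetilde J$ is an isometry with $\widetilde J^2=\epsilon$, commutes or anticommutes with $\widetilde D$ correctly, and — crucially — so that $[\widetilde\pi(a),\widetilde J\widetilde\pi(b)\widetilde J^{-1}]=0$ for \emph{all} $a,b\in\CA$, not merely in $\CB$. Since $\widetilde J$ intertwines the cyclic shift (from $U$) with its inverse (from $U^*=U^{N-1}$), this forces $j$ to reverse the cyclic ordering on $\C^N$, and one has to check that the scalar root-of-unity twists appearing for $V,W$ in the $\mathrm{B3},\mathrm{B4},\mathrm{B6}$ rows do not obstruct this — i.e. that the cocycle they define is trivializable on $\C^N$. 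A secondary subtlety is that in principle the lifted triple might fail to be irreducible over the torus even if the Bieberbach one is irreducible; but the lemma only asserts existence of \emph{a} torus triple restricting to the given one, so this is harmless, and the equivariant classification of torus triples can then be invoked separately.
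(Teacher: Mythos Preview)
Your strategy and the paper's are two packagings of the same Morita-theoretic idea. The paper invokes the isomorphism $\CA(\T^3_\theta)\lcross\Z_N\cong\mathrm{B}N_\theta\otimes M_N(\C)$ from \cite{OlSi2}: it inflates the given triple over $\CB=\mathrm{B}N_\theta$ to one over $\CB\otimes M_N(\C)$ on $\CH\otimes M_N(\C)$ with $D\otimes 1$ and $J'(v\otimes M)=Jv\otimes UM^\dagger U^\dagger$, reads this through the isomorphism as a triple over the crossed product, and then restricts to the subalgebra $\CA(\T^3_\theta)$. Your induced module $\widetilde\CH=\CA\otimes_\CB\CH\cong\CH\otimes\C^N$ is the same Morita transfer seen from the bimodule side rather than the algebra side; the paper's explicit unitary $U_{00}=1$, $U_{kl}=\delta_{k,N-l}$ is precisely your antilinear $j$ reversing the cyclic order on $\C^N$, chosen so that $\widetilde J$ intertwines $h$ with $h^{-1}$. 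The advantage of the paper's route is that the representation-theoretic verifications (that $\widetilde\pi$ is well-defined on all of $\CA$, that $\widetilde J$ lands in the commutant) are absorbed into one pre-established algebra isomorphism; in your approach they must be checked directly.

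There is one genuine error in your write-up. You assert that ``$U$-conjugation implements $h$ on $\CB$'', and use this to justify the compatibility of the shift with the diagonal $\CB$-action. This is false: in $\CA(\T^3_\theta)$ the generator $U$ is \emph{central} ($UV=VU$, $UW=WU$), so conjugation by $U$ is the identity on all of $\CA$ and in particular on $\CB$. What actually makes $\CB$ act diagonally on $\bigoplus_k U^k\otimes\CH$ is exactly that $bU^k=U^kb$, i.e.\ the \emph{triviality} of $U$-conjugation; the $\Z_N$-grading enters only through $U^N\in\CB$ while $U\notin\CB$. Your induced-module construction survives this correction (the induced representation $\CA\otimes_\CB\CH$ is automatically a representation of $\CA$ regardless), but the sentence as written is wrong and would mislead a reader into thinking there is a nontrivial twist in the diagonal action.
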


\begin{proof}
In \cite{OlSi2} we showed that the crossed product algebra $\mathcal{A}(\T^3_\theta)\lcross\Z_N$
is isomorphic to the matrix algebra of the noncommutative Bieberbach manifold algebra:
$$ \CA(\T^3_\theta) \lcross \Z_N \sim \mathrm{B}N_\theta \otimes M_N(\C). $$ 

First, let us recall that any spectral triple $\CA,\CH,D,J$ could be lifted to 
a spectral triple over $\CA \otimes M_n(\C)$. Indeed, if we take $\CH' = \CH \otimes M_n(\C)$
with the natural representation $\pi'(a \otimes m) (h \otimes M) = \pi(a)h \otimes mM$,
the diagonal Dirac operator and $J' (h \otimes M) = Jh \otimes U M^\dagger U^\dagger$, for
an arbitrary unitary $U \in M_n(\C)$ it is easy to see that we obtain again a real spectral
triple. Applying this to the case of ${\mathrm B}N_\theta$, and identifying 
$\mathrm{B} N_\theta \otimes M_N(\C)$ using the above isomorphism we obtain 
a spectral triple over $\mathcal{A}(\T^3_\theta) \lcross \Z_N$. As $\CA(\T^3_\theta)$ is 
a subalgebra of $\mathcal{A}(\T^3_\theta) \lcross \Z_N$ by restriction we obtain, in turn, a spectral 
triple over a three-dimensional noncommutative
 torus. In fact, it is easy to see that 
we obtain a spectral triple, which is equivariant with respect to the action of $\Z_N$ group. 
Clearly, the fact that we have a representation of the crossed product algebra is 
just a rephrasing of the fact that we have a $\Z_N$-equivariant representation
 of $\mathcal{A}(\T^3_\theta)$.  By
 definition, the Dirac operator lifted from the spectral triple 
over ${\mathrm BN}_\theta$ commutes with the group elements (which are identified 
as matrices in $M_N(\C)$. A little care is required to show that the lift of $J$ would 
properly commute with the generator of $\Z_N$. However, since the lift of $J$ involves
a matrix $U$,  it is sufficient to use a matrix, which provides a unitary equivalence 
between the generator $h$ and its inverse $h^{-1}$ of $\Z_N$ in $M_N(\C)$. Simple
computation shows that the following $U$, $U_{00}=1,  U_{kl} = \delta_{k,N-l}$ for 
$k,l =1,\dots, N-1$ is the one providing the equivalence.

Hence, by this construction, we obtain a real, $\Z_N$-equivariant spectral triple over 
${\mathcal A}(\T^3_\theta)$. It is easy to see that the original spectral triple over ${\mathrm BN}_\theta$ is
a restriction of the constructed spectral triple by taking the invariant subalgebra
of ${\mathcal A}(\T^3_\theta)$, the $\Z_N$-invariant subspace of $\CH$ and the restriction of 
$D$ and $J$. 
\end{proof}

\begin{remark}
The procedure described above does not necessarily provides the {\em canonical}
(equivariant) Dirac operator over $\mathcal{A}(\T^3_\theta)$. Indeed, even a simple 
example of $\mathcal{A}(\T^1)$ shows that the lifted Dirac operator differs from the 
fully equivariant one by a bounded term. There may exist, however, a fully equivariant 
triple, so that its restriction is the same triple we started with. 
\end{remark}

\begin{definition}
We call the geometry (spectral triple) over the noncommutative Bieberbach manifold 
$\mathrm{B}N_\theta$ {\bf flat} if it is a restriction of a {\bf flat} spectral triple over the 
noncommutative three-torus, that is, the latter being equivariant with respect to the 
full action of $U(1) \times U(1) \times U(1)$.
\end{definition}

\subsection{Equivariant real spectral triples over  $C(T^3_\theta)$.}

Let us take one of the eight canonical equivariant spectral triples over the 
noncommutative torus $\mathcal{A}(\T^3_\theta)$ (for a definition of equivariance 
see \cite{Si00}, for classification of equivariant real spectral triples over 
a noncommutative two-torus see \cite{PaSi}, for a generalization to higher dimensions 
see \cite{Vens}). 

Let us recall, that the Hilbert space is spanned by two copies of $l^2(\Z^3)$, each 
of them with basis $e_\mu$, where $\mu$ 
is a three-index and each $\mu_1,\mu_2,\mu_3$ is either integer 
or half-integer depending on the choice of the spin structure. We parametrize 
spin structures by $\epsilon_i$ $i=1,2,3$, which can take values $0$ or $\oh$,
so that $\mu_i-\epsilon_i$ is always integer. \cite{Vens}. 

For  ${\bf k} = [k_1,k_2,k_3] \in\Z^3$ let us define the generic homogeneous element of 
the algebra of polynomials over the noncommutative torus:
$$ x^{\bf k} = e^{\pi i \theta k_2 k_3 }U^{k_1} V^{k_2} W^{k_3}.$$ 
We fix the representation of the algebra of the noncommutative torus 
(relevant for the construction  of noncommutative Bieberbach manifolds) 
on $l^2(\Z^3)$ to be:
as follows:
\begin{equation}
\pi (x^{\bf k}) e_\mu=e^{\pi i \theta (k_3 \mu_2  - \mu_3 k_2)} e_{\mu+k},
\label{rep}
\end{equation}    
and on the Hilbert space $\CH$ we take it diagonal:
$$ \pi(x) = \left( \begin{array}{cc}
\pi(x) & 0 \\ 0 & \pi(x) \end{array} \right).
$$

The real structure $J$ is of the form:
$$ J = \left( \begin{array}{cc}
0 & - J_0 \\
J_0 & 0 \end{array} \right), 
$$
where 
$$J_0 e_\mu = e_{-\mu}, \;\;\; \forall \mu \in \Z^3 + \epsilon. $$

The most general equivariant and real Dirac operator (up to rescaling) is of the form: 
\begin{equation}
 D = \left( \begin{array}{cc}
       R \delta_1 & \delta_2 + \tau \delta_3 \\
       \delta_2 + \tau^* \delta_3 & -R \delta_1 
\end{array} \right), 
\label{Dirac3}
\end{equation}
where $R$ is a real parameter and $\tau$ a complex parameter (parametrizing the
conformal structure of the underlying noncommutative $2$-torus). 

The derivations $\delta_i$, $i=1,2,3$ act diagonally on the $l^2(\Z^3)$:
$$ \delta_i e_\mu=\mu_i e_\mu . $$

\begin{theorem}[see \cite{Si00,PaSi,Vens}]
\label{T3triple}
The spectral triple, given by $(\CA_\theta,\pi,J,D, \CH)$ is an $U(1)^3$-equivariant,
irreducible, real spectral triple.
\end{theorem}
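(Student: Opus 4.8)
The plan is to verify the three assertions — equivariance, irreducibility, and the axioms of a real spectral triple — one at a time, since each is essentially a direct computation on the explicit basis $e_\mu$.

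First I would check that $(\CA_\theta,\pi,\CH,D)$ is a spectral triple. The representation $\pi$ in \eqref{rep} is a $\ast$-representation: one checks $\pi(x^{\bf k})\pi(x^{\bf l}) = e^{\pi i\theta(\dots)}\pi(x^{\bf k+l})$ matches the multiplication $x^{\bf k}x^{\bf l}$ coming from the commutation relations $WV = e^{2\pi i\theta}VW$, and that $\pi(x^{\bf k})^\ast = \pi((x^{\bf k})^\ast)$ since the phase in \eqref{rep} is unitary and sends $e_\mu \mapsto e_{\mu+k}$ isometrically. Self-adjointness of $D$ is clear because $\delta_i$ are self-adjoint (real eigenvalues $\mu_i$) and the off-diagonal blocks $\delta_2+\tau\delta_3$ and $\delta_2+\tau^\ast\delta_3$ are mutual adjoints. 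For compact resolvent: $D^2$ is block-diagonal with entries $R^2\delta_1^2 + |\delta_2+\tau\delta_3|^2$, which on $e_\mu$ equals $R^2\mu_1^2 + |\mu_2+\tau\mu_3|^2$; as long as $R\neq 0$ and $\tau\notin\R$ (or more precisely as long as the real quadratic form is positive definite) this grows to infinity as $|\mu|\to\infty$, so the resolvent is compact. Boundedness of $[D,\pi(x^{\bf k})]$ follows because commuting $\delta_i$ past the shift $e_\mu\mapsto e_{\mu+k}$ produces the constant $k_i$, so $[\delta_i,\pi(x^{\bf k})] = k_i\pi(x^{\bf k})$ is bounded, and $D$ is a finite combination of such terms with matrix coefficients.

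Next the real structure. One verifies $J^2 = -1$ (from the off-diagonal block form with $J_0^2 = \id$, since $J_0e_\mu = e_{-\mu}$ is an involution), $JD = DJ$ (check block by block: $J_0$ conjugates $\delta_i$ to $-\delta_i$ because $(-\mu)_i = -\mu_i$, and the sign pattern in the off-diagonal form of $J$ together with the $-R\delta_1$ lower-right entry of $D$ makes everything consistent; note $J_0$ is antilinear so it conjugates $\tau$, matching the $\tau$ versus $\tau^\ast$ placement), and $J\gamma = \pm\gamma J$ in the even case if a grading is present. The order-zero and order-one (first-order) conditions $[\pi(a),J\pi(b)J^{-1}] = 0$ and $[[D,\pi(a)],J\pi(b)J^{-1}] = 0$ reduce to the statement that the "right" representation $b\mapsto J\pi(b^\ast)J^{-1}$, which acts on $e_\mu$ by a shift $e_\mu\mapsto e_{\mu-l}$ with a compensating phase, commutes with the "left" shifts; this is where the specific phase $e^{\pi i\theta(k_3\mu_2-\mu_3k_2)}$ in \eqref{rep} is designed precisely so that left and right multiplications commute, and the first-order condition then follows because $[D,\pi(a)]$ is again a left-shift-type operator. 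The sign table ($J^2 = -1$, $JD = DJ$) identifies the $KO$-dimension as $3$, consistent with a three-torus. I would present these as a table of signs rather than grind each one out.

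Finally, irreducibility and equivariance. For equivariance: the $U(1)^3$ action is implemented by the unitaries $e_\mu\mapsto e^{2\pi i(t_1\mu_1+t_2\mu_2+t_3\mu_3)}e_\mu$, which commute with $D$ (as $D$ is built from the $\delta_i$, diagonal in this basis) and implement the dual action on $\CA_\theta$ via $\pi$; invariance of $J$ under the covariant version of the action is the reason the action lifts to the triple. For irreducibility, suppose a closed subspace $\CH'\subset\CH$ is invariant under $\pi(\CA_\theta)$, $D$, and $J$. Invariance under all $\pi(x^{\bf k})$ means $\CH'$ is invariant under all shifts $e_\mu\mapsto e_{\mu+k}$ (up to phase) in each of the two copies of $l^2(\Z^3)$ separately or mixed; combined with $D$-invariance, which distinguishes the two copies by the sign of the $R\delta_1$ eigenvalue, one shows $\CH'$ must be all of $\CH$ or $0$. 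The main obstacle I anticipate is this irreducibility argument: one has to be careful that invariance under the \emph{diagonal} representation $\pi$ plus $D$ plus $J$ — rather than under a larger algebra acting separately on the two spinor components — genuinely forces $\CH' \in \{0,\CH\}$; the key point is that $D$ mixes the two copies through its off-diagonal blocks while $\pi$ acts identically on both, so a projection commuting with all of them must be a scalar on each joint eigenspace, and the $\pi(\CA_\theta)$-action then connects all these one-dimensional pieces transitively.
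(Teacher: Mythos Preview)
Your proposal is a direct verification, whereas the paper gives no proof at all: the theorem is stated with references to \cite{Si00,PaSi,Vens} and the reader is sent there for the argument. So there is nothing to compare on the level of method --- the paper simply imports the result.

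That said, your sketch is essentially sound for the spectral-triple axioms, the real structure, and the equivariance; those are straightforward computations on the basis $e_\mu$ and you have identified the right ingredients (the phase in \eqref{rep} making left and right actions commute, $J_0$ sending $\delta_i\mapsto-\delta_i$ while conjugating $\tau$, the positive-definiteness condition on the quadratic form $R^2\mu_1^2+|\mu_2+\tau\mu_3|^2$ for compact resolvent). The one place where your argument is genuinely incomplete is irreducibility. The sentence ``$D$-invariance, which distinguishes the two copies by the sign of the $R\delta_1$ eigenvalue'' is not right: $R\delta_1$ takes both signs on each copy. More importantly, on a single copy of $l^2(\Z^3)$ the representation $\pi$ is \emph{not} irreducible --- for irrational $\theta$ this is the GNS representation of a II$_1$ factor, and its commutant is the opposite (right) action $J_0\pi(\cdot)^*J_0$. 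So the correct route is: a projection commuting with $\pi$ lies in $J\pi(\CA_\theta)J^{-1}\otimes M_2(\C)$; commutation with $J$ then forces it into the center (scalars) tensor a $J$-invariant matrix; and finally commutation with $D$, whose off-diagonal blocks $\delta_2+\tau\delta_3$ are nonzero, forces that matrix to be scalar. Your last paragraph gestures at this but does not actually use that the commutant of $\pi$ is the right action, which is the key structural fact. If you want a self-contained proof rather than a citation, that is the step to make precise.
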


Note that the choice of $J$ and the Dirac operator (\ref{Dirac3}) has still
some unnecessary freedom. Indeed, changing $J$ to $-J$ does not influence
any of the axioms. Combining that with a conjugation by Pauli matrices $\sigma^2$
or $\sigma^3$ we see that we might restrict ourselves to the case $R>0$.

We shall discuss later the difference between the choice of $\tau$ or $\tau^*$.

\subsection{The equivariant action of $\Z_N$.}

Our aim in this part will be to construct all equivariant representations of the finite
groups $G=\Z_N$, $N=2,3,4,6$ on the Hilbert space of the spectral triple for
the noncommutative torus $\mathcal{A}(\T^3_\theta)$, which implement the group action
on the algebra. The conditions for the representation $\rho$ of $G$ are:
\begin{itemize}
\item action:
$$ \rho(g) \pi(a) = \pi(g \acts a) \rho(g), \;\;\; \forall g \in G, a \in \CA(\T^3_\theta),$$
\item $D$-equivariance:
$$ D \rho(g) = \rho(g) D, \;\;\; \forall g \in G, $$
\item $J$-equivariance:
$$ J \rho(g) = \rho(g) J, \;\;\; \forall g \in G.$$
\end{itemize}
%%%%%%%%%%%%%%%%%%%%%%%%%%%%%%%%%%%%%%%%%%%%%

We begin with two auxiliary lemmas.
%%%%%%%%%%%%%%%%%%%%%%%%%%%%%%%%%%%%%%%%%%%%%
\begin{lemma}\label{repdiag}
Any equivariant representation of $\Z_N$ on the Hilbert space of the spectral
triple must be diagonal:
$$
\rho(g) = \left( 
\begin{array}{cc}
\rho_+(g) & 0 \\ 0 & \rho_-(g)
\end{array} \right).
$$
and commute with $\delta_1 \otimes 1$.
\end{lemma}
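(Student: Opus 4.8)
The plan is to exploit the interplay between the three equivariance conditions, using the Dirac operator to force block-diagonality and the $J$-equivariance together with the algebra action to pin down the commutation with $\delta_1$. Concretely, I would write $\rho(g)$ as a $2\times 2$ matrix $\bigl(\begin{smallmatrix}A & B\\ C & D'\end{smallmatrix}\bigr)$ of operators on $l^2(\Z^3)$ and analyze the three constraints in turn.

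First I would use $D$-equivariance. Since $D$ has the off-diagonal blocks $\delta_2+\tau\delta_3$ and $\delta_2+\tau^*\delta_3$ and the diagonal blocks $\pm R\delta_1$, the equation $D\rho(g)=\rho(g)D$ splits into four operator equations. The key observation is that the action condition $\rho(g)\pi(x^{\bf k})=\pi(g\acts x^{\bf k})\rho(g)$ severely restricts the form of $A,B,C,D'$: since $\pi(x^{\bf k})$ acts as a shift $e_\mu\mapsto (\text{phase})\,e_{\mu+k}$ and $g\acts x^{\bf k}$ is (up to phase) a monomial $x^{{\bf k}'}$ with ${\bf k}'=M_g{\bf k}$ for the integer matrix $M_g$ encoding the action on $U,V,W$, each block of $\rho(g)$ must send $e_\mu$ into the span of $e_{M_g\mu + c}$ for a fixed shift $c$ (possibly different per block), with coefficients that are essentially determined up to a $\mu$-independent constant. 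In particular each block is (a constant times) a ``generalized permutation'' operator following the linear map $M_g$ on the index lattice. Then I feed this into the $D$-equivariance relations: comparing how $R\delta_1$ versus $-R\delta_1$ intertwine with such operators, the off-diagonal blocks $B,C$ would have to intertwine $R\delta_1$ with $-R\delta_1$ along $M_g$; for all the groups in Table~\ref{table1} the action on $U$ (hence on the first coordinate $\mu_1$) is by a nontrivial root of unity, so $M_g$ acts on $\mu_1$ nontrivially (by a rotation mixing coordinates, or by $\mu_1\mapsto-\mu_1$ only in the nonorientable cases which are excluded), and one checks this is incompatible with $B,C\neq 0$ unless $R=0$, which is excluded. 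Hence $B=C=0$ and $\rho(g)$ is block-diagonal, $\rho(g)=\mathrm{diag}(\rho_+(g),\rho_-(g))$.

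Next, for the commutation with $\delta_1\otimes 1$: having reduced to diagonal form, each $\rho_\pm(g)$ follows the same linear map $M_g$ on the lattice. The remaining diagonal blocks of $D$-equivariance give $(R\delta_1)\rho_+(g)=\rho_+(g)(R\delta_1)$ and similarly for $\rho_-$, i.e. since $R\neq 0$, $\rho_\pm(g)$ commutes with $\delta_1$; equivalently $M_g$ fixes the first coordinate. For $\mathrm{B}3_\theta,\mathrm{B}4_\theta,\mathrm{B}6_\theta$ this is immediate because $M_g$ only permutes/reflects the $(\mu_2,\mu_3)$-block while leaving $\mu_1$ alone; for $\mathrm{B}2_\theta$ one uses that $h\acts V=V^*$, $h\acts W=W^*$ fixes $k_1$ as well (the sign on $U$ does not move the index), so again $M_g$ preserves $\mu_1$. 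Combined with the fact that each $\rho_\pm(g)$ is a phase times the lattice map following $M_g$, commuting with $\delta_1$ is exactly the statement $[\rho(g),\delta_1\otimes 1]=0$.

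The main obstacle I anticipate is the first step: rigorously extracting from the three equivariance conditions that each block of $\rho(g)$ is forced to be of ``generalized permutation'' type following the integer matrix $M_g$, rather than some more exotic intertwiner, and then showing cleanly that the off-diagonal blocks must vanish. The delicate point is handling the phases carefully enough that the comparison of the two possible shift vectors (one from the $+$ block, one from the $-$ block) in the off-diagonal $D$-equivariance equations yields a genuine contradiction for $R\neq 0$; one must be sure the argument uses irreducibility/equivariance and not just formal manipulation. I would isolate this as a small computational sublemma, phrased purely in terms of how operators following affine maps of $\Z^3$ intertwine the diagonal operators $\delta_i$, and dispatch the six cases of Table~\ref{table1} uniformly by noting the common feature that the action never sends $\mu_1\mapsto-\mu_1$ while it does act nontrivially enough on $\mu_1$ (via a root of unity on $U$) to block off-diagonal intertwiners with $\pm R\delta_1$.
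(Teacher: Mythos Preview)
Your proposal takes a much more laborious route than the paper, and the central step---showing the off-diagonal blocks vanish---has a genuine gap. You claim that the off-diagonal blocks $B,C$ ``would have to intertwine $R\delta_1$ with $-R\delta_1$'', but that is not what $D$-equivariance gives you: writing $D=\left(\begin{smallmatrix}R\delta_1 & \partial\\ \partial^* & -R\delta_1\end{smallmatrix}\right)$, the $(1,2)$-block of $[D,\rho(g)]=0$ reads $R(\delta_1 B + B\delta_1)=A\partial-\partial D'$, which couples $B$ to $A$ and $D'$ through the off-diagonal part $\partial$ of $D$. You cannot isolate a clean intertwining condition on $B$ alone without first controlling $A\partial-\partial D'$, and your ``generalized permutation'' ansatz does not obviously make that term vanish. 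There is also an internal inconsistency: in one paragraph you argue that $M_g$ acts nontrivially on $\mu_1$ (to kill $B,C$), and in the next you correctly note that for all the orientable Bieberbachs $M_g$ fixes $\mu_1$ (since $h\acts U=\omega U$ with $\omega$ a root of unity changes only a phase, not the exponent). These two claims cannot both be used.

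The paper's proof bypasses all of this with a one-line trick that you are missing: compute the one-form $U^*[D,U]$. Since $U$ shifts only $\mu_1$, one finds $U^*[D,U]=R\left(\begin{smallmatrix}1&0\\0&-1\end{smallmatrix}\right)$, a constant diagonal matrix. Because $g\acts U=\omega U$ with $|\omega|=1$, the action condition together with $D$-equivariance gives
\[
\rho(g)\,U^*[D,U]=(g\acts U)^*[D,g\acts U]\,\rho(g)=\bar\omega\,\omega\,U^*[D,U]\,\rho(g)=U^*[D,U]\,\rho(g),
\]
so $\rho(g)$ commutes with $\left(\begin{smallmatrix}1&0\\0&-1\end{smallmatrix}\right)$ and is therefore block-diagonal. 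The commutation with $\delta_1\otimes 1$ then follows immediately from the anticommutator $\{U^*[D,U],D\}=2R\,\delta_1\otimes 1$, which commutes with $\rho(g)$ since both factors do. This avoids any analysis of the lattice map $M_g$ or of the individual blocks, and uses only the single structural feature that $g$ acts on $U$ by a scalar.
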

%%%%%%%%%%%%%%%%%%%%%%%%%%%%%%%%%%%%%%%%%%%
\begin{proof}
First of all, observe that from the definition of equivariance the element 
$\sigma^1 = \frac{1}{R}  U^* [D,U]$ commutes with $\rho(g)$ for any $g \in G$:
$$ \rho(g) \left( U^* [D,U] \right) = \alpha_g(U^*) [D, \alpha(U)] \rho(g) = U^* [D,U] \rho(g).$$
This happens since the action of $g$ on $U$ is by multiplication by a root of $1$. Therefore 
the action of $\Z_N$ is diagonal. and we can treat the two copies independently. 

Moreover, $\sigma^1 D + D \sigma^1$ also commutes with $\rho$, and, since the latter
is proportional to $\delta_1 \otimes 1$, we obtain the second statement.
\end{proof}

Further, we have:

\begin{lemma}\label{replem}
Let $\rho_1, \rho_2$ be representation of a finite group $G$ on the Hilbert 
space $\CH$ such that both implement its action on $\CA(\T^3_\theta)$,
are real-equivariant and $D$-equivariant. Then $\rho_1(g) = T(g) \rho_2(g)$, 
for every $g \in G$, where $T(g) = \zeta \cdot 1 \in M_2(\C)$.
\end{lemma}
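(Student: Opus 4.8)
The plan is to compare the two representations directly. Fix $g \in G$ and set $T(g) = \rho_1(g)\rho_2(g)^{-1}$, a unitary on $\CH$. First I would check that $T(g)$ commutes with $\pi(a)$ for every $a \in \CA(\T^3_\theta)$: since both $\rho_i$ implement the same action, $\rho_i(g)\pi(a) = \pi(g\acts a)\rho_i(g)$, so $\rho_1(g)\pi(a)\rho_1(g)^{-1} = \pi(g\acts a) = \rho_2(g)\pi(a)\rho_2(g)^{-1}$, which rearranges to $T(g)\pi(a) = \pi(a)T(g)$. By irreducibility of the spectral triple over the torus (Theorem~\ref{T3triple}), the commutant of $\pi(\CA(\T^3_\theta))$ in $B(\CH)$, once we also impose commutation with $D$ and $J$, is as small as possible; but here I only want commutation with $\pi$, so $T(g)$ lies in the commutant of $\pi$, which is spanned by operators of the form $\sigma \otimes f(\delta_1,\delta_2,\delta_3)$ with $\sigma \in M_2(\C)$ acting on the spinor indices and $f$ a bounded function on $\Z^3$ — essentially because $\pi(x^{\bf k})$ acts by shift-and-phase in the $\mu$-variable and the only operators commuting with all shifts and phases are the diagonal (in $\mu$) ones that are moreover constant along the lattice.

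Next I would use $D$-equivariance. Since $D\rho_i(g) = \rho_i(g)D$ for both $i$, we get $DT(g) = T(g)D$. Combined with the previous step, $T(g)$ is a $D$-commuting element of the commutant of $\pi$. Writing $T(g) = \sigma \otimes f$ as above (more precisely, a finite sum of such terms, or a matrix of diagonal operators $T(g) = \begin{pmatrix} a & b \\ c & d \end{pmatrix}$ with each entry a bounded diagonal operator on $l^2(\Z^3)$), I would feed this into $[D, T(g)] = 0$ using the explicit form (\ref{Dirac3}). The off-diagonal blocks $\delta_2 + \tau\delta_3$ and $\delta_2 + \tau^*\delta_3$ are unbounded multiplication operators; commuting $T(g)$ with them forces the diagonal functions $a,b,c,d$ to be constant (a bounded function on $\Z^3$ commuting with multiplication by the unbounded $\mu_2 + \tau\mu_3$ up to the block structure must be constant), and the $R\delta_1$ terms further pin down the structure. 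This should collapse $T(g)$ to $T(g) = M \otimes 1$ for some $M \in M_2(\C)$; Lemma~\ref{repdiag} already tells us the $\rho_i$ are block-diagonal, so $M$ is diagonal, $M = \mathrm{diag}(\zeta_+, \zeta_-)$.

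Finally I would invoke real-equivariance to collapse the two scalars to one. From $J\rho_i(g) = \rho_i(g)J$ for both $i$ we get $JT(g) = T(g)J$; with the explicit $J = \begin{pmatrix} 0 & -J_0 \\ J_0 & 0\end{pmatrix}$ and $J_0 e_\mu = e_{-\mu}$, the relation $J (M\otimes 1) = (M\otimes 1) J$ forces $\overline{\zeta_-} = \zeta_+$ in the appropriate antilinear sense — but since $T(g)$ is already known to be a scalar times identity on each block and $J$ swaps the blocks with a sign, one finds $\zeta_+ = \zeta_-$, and moreover this common value $\zeta$ is a phase (unitarity), even a root of unity since $G$ is finite and $T: G \to \T$ is then a homomorphism. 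Hence $T(g) = \zeta(g)\cdot 1 \in M_2(\C)$ as claimed.

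I expect the main obstacle to be the second step: rigorously arguing that commutation with the single unbounded off-diagonal block $\delta_2 + \tau\delta_3$ (rather than with the full collection $\delta_1, \delta_2, \delta_3$ separately) already forces the diagonal multiplication functions to be constant. One has to be a little careful that $\mu \mapsto \mu_2 + \tau\mu_3$ separates the points of $\Z^3$ enough, which it does not on its own, but together with the $R\delta_1$ block on the diagonal and the block structure it does; writing everything out in components on the basis $e_\mu$ and tracking which blocks of $[D,T(g)]$ vanish is the computation that carries the real content, and it is where an error could hide.
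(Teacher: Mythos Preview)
Your overall strategy---define $T(g)=\rho_1(g)\rho_2(g)^{-1}$, show it commutes with $\pi$, $D$, $J$, and conclude it is scalar---matches the paper's. The gap is in your first reduction. The commutant of $\pi(\CA(\T^3_\theta))$ in $B(l^2(\Z^3))$ is \emph{not} the algebra of diagonal-in-$\mu$ operators (and certainly not just the scalars): this is the GNS representation from the trace, and its commutant is the opposite (right) action $J_0\pi(\cdot)^*J_0^{-1}$, which is another copy of the noncommutative torus and contains plenty of shift-type operators. So your ansatz $T(g)=\begin{pmatrix}a&b\\c&d\end{pmatrix}$ with $a,b,c,d$ diagonal multiplication operators on $l^2(\Z^3)$ is unjustified, and the block computation you plan in the second step is built on it.

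The paper avoids describing the commutant of $\pi$ altogether. Since $T(g)$ commutes with both $\pi$ and $D$, it commutes with the ``Clifford'' elements $\omega_V=\pi(V)^*[D,\pi(V)]$ and $\omega_W=\pi(W)^*[D,\pi(W)]$ (and with $\sigma^1=\frac{1}{R}\pi(U)^*[D,\pi(U)]$ from Lemma~\ref{repdiag}); these are constant matrices in $M_2(\C)$ and together generate it, so $T(g)=1_{M_2}\otimes S$. Then, just as in Lemma~\ref{repdiag}, the anticommutators $\omega_V D+D\omega_V$, $\omega_W D+D\omega_W$, $\sigma^1 D+D\sigma^1$ are proportional to $\delta_2\otimes 1$, $\delta_3\otimes 1$, $\delta_1\otimes 1$, so $S$ commutes with each $\delta_i$ separately. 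Now $T(g)$ is $U(1)^3$-equivariant as well as commuting with $\pi$, $D$, $J$, and equivariant irreducibility of the torus triple (Theorem~\ref{T3triple}) forces $T(g)=\zeta\cdot 1$. This sidesteps precisely the obstacle you flagged: one never has to argue that $\mu\mapsto\mu_2+\tau\mu_3$ separates points, because commutation with the individual $\delta_i$ comes for free from the anticommutator trick.
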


\begin{proof}
{}From the fact that both representations implement the action we see that
$T(g) = \rho_1(g) \rho_2(g^{-1})$ commutes with the representation of the 
algebra $\CA(\T^3_\theta)$. Since it $D, T(g)=0$, it commutes with
$V^* [D,V]$ and $W^*[D,W]$ and therefore with any matrix in $M_2(\C)$. 
Using similar arguments as in the Lemma \ref{repdiag} we see that it must
therefore commute with $\delta_2$ and $\delta_3$. Hence, $T(g)$ is an 
$u(1)^3$ invariant element, which commutes with $D$, $J$ and the representation 
of the algebra. On the other hand, we know that the spectral triple from Theorem \ref{T3triple} 
is equivariant-irreducible, so $T(g)$ must be proportional to the identity operator.
\end{proof}
The strategy of finding all equivariant representations will be based on finding one,
which is real-equivariant and then, with the of the above lemma, finding all possible
representations, which are real-equivariant and commute with $D$ at the same time.
%%%%%%%%%%%%%%%%%%%%%%%%%%%%%%%%%%%%%%%%%%%

\subsection{Spin structures and equivariant actions of $\Z_N$}

In this section we shall determine, which of the spin structures over the 
noncommutative torus may admit an equivariant action of $\Z_N$ for the
chosen values of $N$. This will be a first step only, as later we shall proceed
with further restrictions. Since the arguments and computations are basically 
identical in each case, we combine them into a single section.

We denote by $h$ the generator of the group $\Z_N$, $h^N=\id$. As it follows from 
Lemma (\ref{repdiag}) the representation $\rho$ of $\Z_N$ is diagonal: 
$$ \rho(h)   e_{\mu_1,\mu_2,\mu_3, \pm} = \rho_\pm(h)  e_{\mu_1,\mu_2,\mu_3},$$
with $\rho_\pm$ being operators on $l^2(\Z^3)$.

Additionally, due to  $J$-equivariance, they satisfy:
\begin{equation}
J_0 \rho_+(h) = \rho_-(h) J_0. 
\label{rhoJ}
\end{equation}

We shall introduce convenient notation for the action of the group $\Z_N$ on 
the generators (and the basis) of the algebra $\CA(\T^3_\theta)$, which uses
the shorthand presentation (\ref{rep}):
\begin{equation}
h \acts x^{\bf k} = e^{2 \pi i\frac{k_1}{N}} x^{[k_1, A [k_2,k_3]]},  
\label{repsh}
\end{equation}
where $k \in \Z^3$ and $A \in M_2(\Z)$ is the following matrix (for each
of $N=2,3,4,6$, respectively:
$$
\begin{array}{c|c|c|c}
2& 3 &4 &6 \\[2mm] \hline &&& \\[-2mm]
\mm{-1}{0}{0}{-1}&
\mm{-1}{-1}{1}{0}&
\mm{0}{-1}{1}{0}& 
\mm{0}{-1}{1}{1}
\end{array}.
$$
%%%%%%%%%%%%%%%%%%%%%%%%%%%%%%%%%%%%
\begin{lemma}
\label{zetafix}
The only $D$-equivariant and real-equivariant action of $\Z_N$ on $\CH$,
which implements the action of $\Z_N$ on the algebra (Table \ref{table1}) is 
possible if:
%%%%%%%%%%%%%%%%%%%%%%%%%%%%%%%%%%%%%
\begin{center}
\begin{tabular}{|c||c|c|c|}
\hline
$N$       & $3$ & $4$ & $6$ \\ \hline \hline
$\tau$    & $e^{\pm\frac{1}{3}\pi i}$ & $e^{ \pm\frac{1}{2}\pi
 i}$ & $e^{\pm\frac{1}{3}\pi i}$\\ \hline
$\epsilon_2$ & $0$ & $\epsilon$ & $0$ \\ \hline 
$\epsilon_2$ & $0$ & $\epsilon$ & $0$ \\ \hline
\end{tabular}
\end{center}
with no restrictions on $\tau$ in the case $N=2$. 

For every such action, there exists $\zeta \in \C$ such that 
$\rho_-(h)=\zeta \rho_+(h)$:
\begin{center}
\begin{tabular}{|c|c|c|c|c|}
\hline
N & 2 & 3 & 4 & 6 \\ \hline
$\tau$ & -- & $e^{\pm \frac{1}{3}\pi i}$ & $e^{\pm \frac{1}{2}\pi i}$ & $e^{\pm
 \frac{1}{3}\pi i}$ \\ \hline
$\zeta$ & $-1$ & $-\tau$ & $\tau^*$ & $\tau^*$ \\ \hline
\end{tabular}
\end{center}
\end{lemma}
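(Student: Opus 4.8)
The plan is to work with the explicit representation (\ref{rep}), the Dirac operator (\ref{Dirac3}) and the shorthand (\ref{repsh}) for the $\Z_N$-action, and to determine the operators $\rho_\pm(h)$ on $l^2(\Z^3)$ directly. By Lemma \ref{replem} it suffices to find one real-equivariant, $D$-equivariant representation and then argue uniqueness up to the scalar ambiguity already described; by Lemma \ref{repdiag} the representation is diagonal and commutes with $\delta_1\otimes 1$, so $\rho_\pm(h)$ acts only on the $(\mu_2,\mu_3)$-indices (and possibly multiplies by a $\mu_1$-independent phase). First I would impose the \emph{action} condition $\rho_\pm(h)\pi(x^{\bf k})=\pi(h\acts x^{\bf k})\rho_\pm(h)$: comparing the phase factors in (\ref{rep}) and (\ref{repsh}) forces $\rho_\pm(h)e_\mu=\lambda_\pm(\mu)\,e_{[\mu_1,A^{-1}[\mu_2,\mu_3]]}$ up to a global phase, where $A$ is the matrix listed for each $N$, and the cocycle $\lambda_\pm$ must satisfy a functional equation coming from the $\theta$-dependent phases. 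Solving that functional equation (a routine but slightly tedious computation, paralleling the known two-torus case) pins down $\lambda_\pm$ up to one free constant per sheet, and the requirement $\rho_\pm(h)^N=\id$ together with the parity of $\epsilon$ constrains that constant.

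Next I would impose $D$-equivariance. Since $\delta_1$ already commutes with $\rho$, the content is in the off-diagonal block $\delta_2+\tau\delta_3$ of (\ref{Dirac3}): the relation $D\rho(h)=\rho(h)D$ becomes a relation between $\rho_+(h)$, $\rho_-(h)$ and the operator $\delta_2+\tau\delta_3$ versus $\delta_2+\tau^*\delta_3$. Because $\rho_\pm(h)$ permutes basis vectors according to the integer matrix $A$, conjugating $\delta_2+\tau\delta_3$ by $\rho_+(h)$ replaces the column vector $(\mu_2,\mu_3)$ by $A^{-1}(\mu_2,\mu_3)$, i.e. replaces the linear functional $(1,\tau)$ by $(1,\tau)A^{-1}$. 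Matching this against $\zeta$ times $(1,\tau^*)$ (the lower-left block, after accounting for $\rho_-=\zeta\rho_+$) yields, for each $N$, two scalar equations: one fixes $\tau$ to be a root of unity of the stated order (for $N=3,4,6$ the eigenvalues of $A$ are primitive sixth, fourth, third roots of unity respectively, which is exactly why $\tau\in\{e^{\pm i\pi/3}\},\{e^{\pm i\pi/2}\},\{e^{\pm i\pi/3}\}$ appear), and the other fixes $\zeta$ as in the second table; for $N=2$ one has $A=-I$, the functionals $(1,\tau)$ and $(1,\tau^*)$ are both sent to their negatives, the $\tau$-condition is vacuous, and one reads off $\zeta=-1$.

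Then I would impose $J$-equivariance via (\ref{rhoJ}): $J_0\rho_+(h)=\rho_-(h)J_0$, i.e. $\rho_-(h)=J_0\rho_+(h)J_0^{-1}$. Since $J_0$ sends $e_\mu$ to $e_{-\mu}$, this says $\rho_-(h)$ is the "complex-conjugate" partner of $\rho_+(h)$; combined with $\rho_-(h)=\zeta\rho_+(h)$ from Lemma \ref{replem}, it forces $|\zeta|=1$ and also constrains the cocycle $\lambda_\pm$, in particular forcing $\epsilon_2$ (and $\epsilon_3$) to the values $0$ or $\epsilon$ in the table: the point is that $J_0$ commutes with $\rho_\pm(h)$ only when the half-integer shifts $\epsilon_2,\epsilon_3$ are compatible with the permutation by $A$, which for $N=3,6$ (where $A$ mixes the two indices nontrivially and has order $6$ or $3$) forces $\epsilon_2=\epsilon_3=0$, while for $N=4$ (where $A$ swaps the indices up to sign) only forces $\epsilon_2=\epsilon_3$, giving the common value $\epsilon$. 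Finally I would collect the surviving cases and record the value of $\zeta$ in each, noting that the remaining freedom (a global phase in $\rho_+(h)$, and the sign/conjugation ambiguities mentioned after Theorem \ref{T3triple}) does not affect the conclusion.

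The main obstacle I expect is not any single computation but keeping the bookkeeping straight across the four matrices $A$ simultaneously: one must verify that the functional equation for $\lambda_\pm$ has a solution at all (existence of \emph{some} equivariant $\rho$, which is what lets us invoke Lemma \ref{replem}), and then carefully track how the $\theta$-dependent phases in (\ref{rep}) interact with the permutation action of $A$ — this is where the constraint on $\tau$ really comes from, and getting the branch of the sixth/fourth/third root of unity right (hence the $\pm$ in the tables) is the delicate point. The $N=2$ case is genuinely easier and can be done first as a warm-up, since $A=-I$ trivializes the $\tau$-constraint and the whole argument reduces to checking $\rho_\pm(h)^2=\id$ and (\ref{rhoJ}).
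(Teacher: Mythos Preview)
Your approach is correct and will reach the same conclusions, but it is organised differently from the paper and is somewhat more laborious.

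The paper never solves for the explicit form of $\rho_\pm(h)$ on basis vectors in this lemma. Instead it introduces the constant one-forms
\[
\omega_V=\pi(V^*)[D,\pi(V)]=\mm{0}{1}{1}{0},\qquad
\omega_W=\pi(W^*)[D,\pi(W)]=\mm{0}{\tau}{\tau^*}{0},
\]
and uses only the implementation and $D$-equivariance conditions to obtain
\[
\rho(h)\,\omega_X = (h\acts X)^*\,[D,h\acts X]\,\rho(h),\qquad X\in\{V,W\},
\]
which, because $h$ acts on $V,W$ through the integer matrix $A$, is a purely algebraic $2\times2$ identity relating $\rho_+(h)$ and $\rho_-(h)$ through $\tau$. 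Self-consistency of these two scalar relations forces $\tau$ to be a left eigenvalue of $A$ (so $\tau=e^{\pm i\pi/3}$, $e^{\pm i\pi/2}$, $e^{\pm i\pi/3}$ for $N=3,4,6$, no constraint for $N=2$) and simultaneously yields $\rho_-(h)=\zeta\rho_+(h)$ with the listed $\zeta$. No knowledge of the cocycle $\lambda_\pm$ or of the spin structure is needed at this stage; the paper postpones the determination of $\rho$ on $e_\mu$ and the $\epsilon_2,\epsilon_3$ constraints to the lemmas and theorem that follow.

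Your route---first pinning down $\rho_\pm(h)e_\mu=\lambda_\pm(\mu)e_{[\mu_1,A[\mu_2,\mu_3]]}$ from the action condition, then reading off the $\tau$ and $\zeta$ constraints by conjugating $\delta_2+\tau\delta_3$---is the same eigenvector observation seen from the other side, and works equally well; it just front-loads the computation that the paper defers. Two small corrections: the permutation is by $A$, not $A^{-1}$ (compare the paper's later formula for $\rho_\pm(h)e_\mu$), and the $\epsilon_2,\epsilon_3$ restrictions do not really come from $J$-equivariance but already from the requirement that $A$ preserve the shifted lattice $(\Z+\epsilon_2)\times(\Z+\epsilon_3)$, i.e.\ from well-definedness of $\rho$ on the given spin structure. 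With those adjustments your argument goes through.
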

%%%%%%%%%%%%%%%%%%%%%%%%%%%%%%%%%%%%%%%%%
%%%%%%%%%%%%%%%%%%%%%%%%%%%%%%%%%%%%%%%%%
%%%%%%%%%%%%%%%%%%%%%%%%%%%%%%%%%%%%%%%%%
\begin{proof}
Consider $\omega_V = \pi(V^*) [D,\pi(V)])$ and $\omega_W = \pi(W^*) [D, \pi(W)])$:
$$ \omega_V = \mm{0}{1}{1}{0}, \;\;\;\; \omega_W = \mm{0}{\tau}{\tau^*}{0}. $$
Using the fact that $\rho$ implements the action of $\Z_N$ on the algebra we have:
\begin{equation}
 \left[ \begin{array}{cc} \rho(h) & 0 \\ 0&\rho(h)  \end{array} \right]
 \left[ \begin{array}{cc} \omega_V \\ \omega_W \end{array} \right] 
= A^T \left[ \begin{array}{cc} \omega_V \\ \omega_W \end{array} \right] 
 \left[ \begin{array}{cc} \rho(h) & 0 \\ 0 & \rho(h)  \end{array} \right]. 
\end{equation}
Rewriting the above identities we get, in each of the cases:
\begin{equation}
\begin{aligned}
N=2:& \;\;\;\;\; &\rho_+(h)  = - \rho_-(h),  \;\;\;\;\; \tau \rho_+ = - \rho_-(h) \tau, \\
N=3:& \;\;\;\;\; &\rho_+(h)  = (\tau-1) \rho_-(h),  \;\;\;\;\; \tau \rho_+ = - \rho_-(h), \\
N=4:& \;\;\;\;\; &\rho_+(h)  = \tau \rho_-(h),  \;\;\;\;\; \tau \rho_+ = - \rho_-(h), \\
N=6:& \;\;\;\;\; &\rho_+(h)  = \tau \rho_-(h),  \;\;\;\;\; \tau \rho_+ = (\tau-1) \rho_-(h).
\label{rhotau}
\end{aligned}
\end{equation}
which are self-consistent only if 
\begin{equation}
\begin{aligned}
N=3,6:& \;\;\;\;\; &\tau = e^{\pm \frac{1}{3}\pi i}, \\
N=4:& \;\;\;\;\; & \tau =   e^{\pm \frac{1}{2}\pi i }.
\label{taucon} 
\end{aligned}
\end{equation}
There are no restrictions for $\tau$ in the case $N=2$. 
\end{proof}

Next, consider the vector $e_{\beps} = e_{\epsilon_1,\epsilon_2,\epsilon_3}$, which is 
generating and separating vector for $l^2(\Z^3)$ for a given spin structure determined 
by $\beps$. In order to determine $\rho_\pm(h)$ it is sufficient to see its action on
$e_{\beps}$.

\begin{lemma}
There exist elements in $\CA(\T^3_\theta)$ (which are polynomials in the generators)
such that:
$$ \rho_+(h) e_{\beps} = \sigma_+ e_{\beps}, \;\;\; 
     \rho_-(h) e_{\beps} = \sigma_- e_{\beps}.$$
\end{lemma}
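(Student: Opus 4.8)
The plan is to show that $\rho_\pm(h)$ acts on the cyclic vector $e_{\beps}$ by sending it into a (finite) linear combination of basis vectors whose labels lie in the orbit of $\beps$ under the relevant affine transformation, and then to realise that combination as a polynomial in the torus generators applied to $e_{\beps}$. First I would use Lemma \ref{repdiag} to fix $\rho(h)$ in the diagonal form $\mathrm{diag}(\rho_+(h),\rho_-(h))$ and the fact that each $\rho_\pm(h)$ commutes with $\delta_1$; since $\delta_1 e_\mu = \mu_1 e_\mu$, commuting with $\delta_1$ means $\rho_\pm(h)$ preserves each eigenspace of $\delta_1$, hence can only change $\mu_2,\mu_3$ and multiply by a scalar depending on $\mu_1$.

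Next I would exploit the intertwining relation $\rho(h)\pi(x^{\bf k}) = \pi(h\acts x^{\bf k})\rho(h)$ from the action condition, together with the explicit formula (\ref{repsh}) giving $h\acts x^{\bf k}= e^{2\pi i k_1/N} x^{[k_1,A[k_2,k_3]]}$. Applying both sides to $e_{\beps}$ and using that $\pi(x^{\bf k})e_{\beps}$ is a multiple of $e_{\beps+{\bf k}}$ while $\pi(x^{[k_1,A[k_2,k_3]]})e_{\beps}$ is a multiple of $e_{\beps + [k_1,A[k_2,k_3]]}$, one reads off that $\rho_\pm(h)e_{\beps+{\bf k}}$ is a scalar multiple of $e_{\beps+[k_1,A[k_2,k_3]]}$ for every ${\bf k}$. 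In particular $\rho_\pm(h)e_{\beps}$ is supported on a single basis vector $e_{\beps + {\bf m}}$ for some fixed ${\bf m}$ (the one corresponding to ${\bf k}=0$, forced by consistency of the whole family), so $\rho_\pm(h)e_{\beps} = \sigma_\pm\, e_{\beps+{\bf m}}$ with $\sigma_\pm\in\C$. It then remains to see that $e_{\beps+{\bf m}}$ itself equals $\pi(x^{\bf m})e_{\beps}$ up to a nonzero scalar (immediate from (\ref{rep}) since $e_{\beps}$ is separating), so absorbing that scalar into the polynomial $\pi(x^{\bf m})\in\CA(\T^3_\theta)$ gives the claimed elements, with $\sigma_+$ and $\sigma_-$ the corresponding coefficients.

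The one genuinely delicate point is verifying that $\rho_\pm(h)e_{\beps}$ is supported on a basis vector with an \emph{admissible} index, i.e. that $\beps+{\bf m}$ lies in $\Z^3+\beps$: this is exactly the constraint that forces the entries of ${\bf m}$ in the $2,3$ directions to be integers, which via the half-integer bookkeeping of the spin structure reproduces the conditions $\epsilon_2=\epsilon_3=0$ in the cases $N=3,6$ recorded in Lemma \ref{zetafix}. So in those cases the lemma is only asserted under the spin-structure restrictions already established, and the proof should invoke Lemma \ref{zetafix} to know we are in an admissible situation before extracting ${\bf m}$.

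Finally, I would note that the normalisation $\rho(h)^N=1$ together with $J$-equivariance (\ref{rhoJ}) will pin down $\sigma_\pm$ and the relation between them, but that is not needed for the present statement — here we only need existence of the two polynomials, which follows from the orbit/cyclicity argument above. The main obstacle, then, is purely the index-admissibility check; everything else is a direct unwinding of the equivariance relation against the explicit representation (\ref{rep}) and the action formula (\ref{repsh}).
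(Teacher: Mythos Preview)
Your argument has a genuine gap in the second paragraph. When you apply the intertwining relation $\rho(h)\pi(x^{\bf k})=\pi(h\acts x^{\bf k})\rho(h)$ to $e_{\beps}$, the right-hand side is $\pi(x^{[k_1,A[k_2,k_3]]})\,\rho_\pm(h)\,e_{\beps}$, not $\pi(x^{[k_1,A[k_2,k_3]]})\,e_{\beps}$. You have silently replaced $\rho_\pm(h)e_{\beps}$ by $e_{\beps}$ in that step, which is precisely what you are trying to establish. All the intertwining relation gives you is that $\rho_\pm(h)$ is completely determined by its value on the single vector $e_{\beps}$; it puts no finiteness or single-support constraint on that value. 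Commutation with $\delta_1$ alone only pins down the first index, leaving an infinite-dimensional space of possibilities for $\rho_\pm(h)e_{\beps}$. So the claimed conclusion ``$\rho_\pm(h)e_{\beps+{\bf k}}$ is a scalar multiple of $e_{\beps+[k_1,A[k_2,k_3]]}$'' is unproved, and the rest of the argument collapses.

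The paper's proof avoids this by using one more piece of the equivariance data that you never invoke: $\rho(h)$ commutes with $D$, hence with $D^2$, and $D^2$ is diagonal in the basis $e_\mu$ with \emph{finite-dimensional} eigenspaces. Thus $\rho_+(h)e_{\beps}$ automatically lies in a finite span of basis vectors $e_\mu$ with $\mu\in\Z^3+\beps$, and each such $e_\mu$ is $\pi(x^{\mu-\beps})e_{\beps}$ up to a phase, giving the polynomial $\sigma_+$ directly. Note also that this argument does not require the spin-structure restrictions at all; in the paper's logical order the present lemma is an input to the theorem deriving those restrictions, not a consequence of them, so your appeal to Lemma~\ref{zetafix} for admissibility would be circular even if the earlier step were sound.
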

\begin{proof}
Clearly, since $\rho_\pm(h)$ are proportional to each other it is sufficient to show it for
$\rho_+(h)$. As the action of $\Z_N$ commutes with $D$, it commutes with $D^2$, which
is diagonal, hence $\rho_+(h)$ commutes with the restriction of $D^2$ to $l^2(\Z^3)$. The
vector $e_{\beps}$ is an eigenvector of $D^2$ and the eigenspace of the same eigenvalue 
is finite dimensional and spanned by the vectors $e_{\mu_1,\mu_2,\mu_3}$. Since each
of such vectors could be obtained by acting with a homogenous polynomial in the 
generators of the noncommutative three-torus on $e_{\beps}$ any vector in this subspace
is of the form $\sigma_+ e_{\beps}$, where $\sigma_+ \in \CA(\T^3_\theta)$ is a 
polynomial in the generators.
\end{proof}

\begin{theorem}
The action of $\Z_N$ on the Hilbert space of the equivariant spectral 
triple over $\CA(\T^3_\theta)$, which implements the action (\ref{repsh}) 
on the algebra and which commutes with $J$ and $D$ is possible only for 
some of the spin structures for the torus, parametrized by
$\epsilon_k =0, \oh$, $k=1,2,3$,
\begin{itemize}
\item For $N=3,6$: $\epsilon_2=\epsilon_3=0$.
\item For $N=4$: $\epsilon_2=\epsilon_3$.
\end{itemize}
\end{theorem}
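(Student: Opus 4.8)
The plan is to determine how $\rho_+(h)$ moves the separating vector $e_\beps$ precisely enough that the constraint on $(\epsilon_2,\epsilon_3)$ falls out as a lattice-membership condition. By Lemma~\ref{repdiag} the operator $\rho(h)=\mm{\rho_+(h)}{0}{0}{\rho_-(h)}$ commutes with $\delta_1\ts 1$, and by Lemma~\ref{zetafix} we may write $\rho_-(h)=\zeta\,\rho_+(h)$ with $\zeta$ as tabulated there and with $\tau$ equal to one of the listed non-real primitive roots of unity ($N=3,4,6$ being the cases at issue). Moreover $\rho_+(h)$ is invertible, since $\rho(h)^N=\id$.

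First I would read off the ``support rule'' for $\rho_+(h)$ from $D$-equivariance. Expanding $\rho_+(h)e_\mu=\sum_\nu c_{\nu\mu}e_\nu$ and setting the two off-diagonal blocks of $[D,\rho(h)]$ to zero, using $\rho_-(h)=\zeta\rho_+(h)$, forces, for every $\nu,\mu$ with $c_{\nu\mu}\neq 0$,
$$ \nu_2+\tau\nu_3=\zeta^{-1}(\mu_2+\tau\mu_3),\qquad \nu_2+\tau^*\nu_3=\zeta(\mu_2+\tau^*\mu_3), $$
while commuting with $\delta_1\ts1$ gives $\nu_1=\mu_1$. Because $\tau\notin\R$, the two complex-linear equations have a \emph{unique} solution $(\nu_2,\nu_3)$ for each $(\mu_2,\mu_3)$; substituting the explicit $\tau$ and $\zeta$ from Lemma~\ref{zetafix} and the matrix $A$ attached to $N$, a one-line computation in each of the cases $N=3,4,6$ shows that this solution is exactly $(\nu_2,\nu_3)^{T}=A\,(\mu_2,\mu_3)^{T}$. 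Hence $\rho_+(h)e_\mu$ is carried by the single basis vector $e_{(\mu_1,\,A(\mu_2,\mu_3))}$, with no additional translation.

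Next I would invoke the preceding lemma, which gives $\rho_+(h)e_\beps=\sigma_+e_\beps$ with $\sigma_+=\sum_{{\bf k}}c_{{\bf k}}x^{{\bf k}}\in\CA(\T^3_\theta)$ a polynomial, necessarily nonzero since $\rho_+(h)$ is invertible and $e_\beps$ is separating. Expanding, $\rho_+(h)e_\beps$ is a nontrivial combination of the vectors $e_{\beps+{\bf k}}$ over those ${\bf k}\in\Z^3$ with $c_{{\bf k}}\neq 0$; by the support rule each such $\beps+{\bf k}$ must equal $(\epsilon_1,A(\epsilon_2,\epsilon_3))$, i.e. $(k_2,k_3)^{T}=(A-I)(\epsilon_2,\epsilon_3)^{T}$, and since at least one such ${\bf k}$ exists this forces $(A-I)(\epsilon_2,\epsilon_3)^{T}\in\Z^2$. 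Evaluating $A-I$ on the four vectors of $\{0,\oh\}^2$ then yields precisely $\epsilon_2=\epsilon_3=0$ for $N=3,6$ and $\epsilon_2=\epsilon_3$ for $N=4$, which is the claim; and since $A\in SL_2(\Z)$, once this holds the map $\mu\mapsto(\mu_1,A(\mu_2,\mu_3))$ does preserve $\Z^3+\beps$, so this stage imposes no further obstruction.

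The hard part is the second paragraph: showing that $D$-equivariance, once $\zeta$ has been fixed by Lemma~\ref{zetafix}, genuinely forces the index transformation of $\rho_+(h)$ to be $A$ with \emph{no} extra translation term. A nonzero translation would change, indeed typically trivialize, the lattice condition in the last step; what rescues the argument is precisely that the admissible $\tau$ are non-real roots of unity, so the $2\times2$ system is nondegenerate, and that the value of $\zeta$ from Lemma~\ref{zetafix} is the one for which $A$ acts unitarily on the form $\mu\mapsto\mu_2+\tau\mu_3$. Everything else is routine bookkeeping: the case-by-case check that the unique solution of the system is $A(\mu_2,\mu_3)$, and the elementary arithmetic with $A-I$ on $\{0,\oh\}^2$.
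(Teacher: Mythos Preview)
Your proof is correct and reaches the same conclusion, but it is packaged differently from the paper's argument, and the difference is worth noting.

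The paper proceeds by writing $\rho_+(h)e_{\beps}=\sigma_+e_{\beps}$, then applying $[D,\rho(h)]=0$ to $e_{\beps,\pm}$ to derive a pair of eigenvalue-type equations $\zeta\,\partial(\sigma_+)=(1-\zeta)(\epsilon_2+\tau\epsilon_3)\sigma_+$ and its conjugate, and finally checking case by case for which $(\epsilon_2,\epsilon_3)$ these admit a nonzero polynomial solution in $\CA(\T^3_\theta)$. Your route is more geometric: you use $D$-equivariance on an \emph{arbitrary} $e_\mu$ to extract the support rule, observe that non-reality of $\tau$ makes the $2\times2$ system nondegenerate so that the support is exactly $e_{(\mu_1,A(\mu_2,\mu_3))}$ with \emph{no} translation, and then the spin-structure constraint becomes the single uniform lattice condition $(A-I)(\epsilon_2,\epsilon_3)^{T}\in\Z^2$, which you evaluate on $\{0,\oh\}^2$.

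Both arguments rest on the same ingredients (Lemma~\ref{zetafix}, the separating-vector lemma, and the explicit $\zeta$), and the case-check that the unique solution is $A(\mu_2,\mu_3)$ is essentially the content of the equations (\ref{rhotau}) read backwards. Your formulation has the virtue of producing the constraint in a form that is uniform in $N$ and makes transparent why no extra affine shift appears; the paper's formulation instead makes explicit the differential equation that $\sigma_+$ satisfies, which feeds more directly into the later explicit description of $\rho_\pm(h)$ in (\ref{fact}). One small remark: your appeal to the preceding lemma is not strictly necessary, since once the support rule is established, invertibility of $\rho_+(h)$ alone forces $(\epsilon_1,A(\epsilon_2,\epsilon_3))\in\Z^3+\beps$; but invoking it does no harm.
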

%%%%%%%%%%%%%%%%%%%%%%%%%%%%%%%%%%%%%%%%%%%%%%%%%%
\begin{remark}
Before we begin with the proof, observe that the above theorem list {\bf some} of
the necessary restrictions but not necessarily {\bf all} of them. We shall see that in
some cases ($N=2$, in particular) there are some additional constraints.
\end{remark}
%%%%%%%%%%%%%%%%%%%%%%%%%%%%%%%%%%%%%%%%%%%%%%%%%%
\begin{proof}
We know that the representation of the discrete group $\Z_N$ on the generating vectors
$e_{\beps}$ is by the action of algebra elements $\sigma_\pm$, proportional to each 
other. We shall consider now the action of the commutator $[D, \rho(h)]$ on 
$e_{\beps, \pm}$. Since we have already established that $\rho(h)$, commutes with 
$\delta_1$, we have:
\begin{align*}
&\left( \zeta (\delta_2 + \tau \delta_3) \sigma_+ - \sigma_+ (\delta_2 +\tau \delta_3) \right) e_{\beps} = 0, \\
&\left( (\delta_2 + \tau^* \delta_3) \sigma_+ - \zeta \sigma_+ (\delta_2 +\tau^* \delta_3) \right) e_{\beps} = 0, 
\end{align*}
where we have already taken into account that $\sigma_- = \zeta \sigma_+$. If we call
$\partial = \delta_2 + \tau \delta_2$, these equations could be rewritten as:
\begin{align*}
&\left( \zeta (\partial(\sigma_+) + (\zeta -1) \sigma_+ \partial \right) e_{\beps} = 0, \\
&\left( (\partial^*(\sigma_+) + (1 - \zeta) \sigma_+ (\partial^*) \right) e_{\beps} = 0,
\end{align*}
and further, taking into account that $\partial e_{\beps} = (\epsilon_2 + \tau \epsilon_3)e_\epsilon$:
\begin{align*}
&\left( \zeta \partial(\sigma_+) + (\zeta -1) \sigma_+ (\epsilon_2 + \tau \epsilon_3) \right) e_{\beps} = 0, \\
&\left( \partial^*(\sigma_+) + (1 - \zeta) \sigma_+  (\epsilon_2 + \tau^* \epsilon_3) \right) e_{\beps} = 0.
\end{align*}
The elements acting on $e_{\beps}$ are polynomials from $\CA(\T^3_\theta)$, so, since 
$e_{\beps}$ is a separating vector they must vanish identically, therefore we obtain the
following equations:
\begin{align*}
& \zeta \partial(\sigma_+) = (1-\zeta )(\epsilon_2 + \tau \epsilon_3)  \sigma_+ , \\
& \partial^*(\sigma_+) = (\zeta - 1)  (\epsilon_2 + \tau^* \epsilon_3) \sigma_+,
\end{align*}
and we need to look for possible solutions in the polynomial algebra generated by
$U,V,W$ depending on the values of $\epsilon_2, \epsilon_3$ and $\zeta$, with the 
latter fixed in the Lemma (\ref{zetafix}).

First of all, observe that in the case $N=2$, $1-\zeta=2$ and there are no restrictions
on $\epsilon_2$ and $\epsilon_3$ and $\sigma_+$ proportional to $V^{-2 \epsilon_2} W^{-2 \epsilon_3}$
is the solution.

In the case of $N=3,4,6$ we find that the explicit equations are:
\begin{align*}
&N=3: \;\;\;\; 
\partial \sigma_+ = -  (\epsilon_2 + \epsilon_3 + \tau \epsilon_2 + \tau^* \epsilon_3) \sigma_+, \\
&N=4: \;\;\;\;
\partial \sigma_+ = -  (\epsilon_2 + \epsilon_3 + \tau \epsilon_2 + \tau^* \epsilon_3) \sigma_+, \\
&N=6: \;\;\;\;
\partial \sigma_+ =  (-\epsilon_2 + \tau (\epsilon_2 - \epsilon_3) + \tau^2 \epsilon_3) \sigma_+, \\
\end{align*}
and it is an easy exercise to check that they have nonzero solutions only in the
following cases:
\begin{equation}
\begin{aligned}
&N=3: \;\;\;\;\; &  
\epsilon_2 = \epsilon_3 = 0, \\
&N=4: \;\;\;\;\; &  
\epsilon_2 = \epsilon_3 = 0 \;\; \hbox{\it or\ \ } 
\epsilon_2 = \epsilon_3 = \oh, \\
&N=6: \;\;\;\;\; &  
\epsilon_2 = \epsilon_3 = 0.
 \end{aligned}
\end{equation}
\end{proof}
%%%%%%%%%%%%%%%%%%%%%%%%%%%%%%%%%%%%%%%%%%%%%%%%

Let us summarize the results of this section: we have established that the desired
actions of $\Z_N$ exist only for some of the spin structures over the noncommutative
three-torus and, if they exist, they are unique up to multiplication by a scalar. This
will allow us to classify, case by case, all possible actions and hence, all possible
restrictions of the spectral triple over noncommutative torus to the fixed point subalgebra
of the noncommutative Bieberbachs.
%%%%%%%%%%%%%%%%%%%%%%%%%%%%%%%%%%%%%%%%%%%
%%%%%%%%%%%%%%%%%%%%%%%%%%%%%%%%%%%%%%%%%%%

\subsection{All equivariant actions of $\Z_N$}

Here we restrict ourselves to the actions only for the spin structures determined 
in the previous section and we proceed case by case. However, we begin a general
lemma:

\begin{lemma}
For a given spin structure on $\CA(\T^3_\theta)$ the following is a most general
real-equivariant and diagonal action of $\Z_N$ on $l^2(\Z^3)$, which implements
the action on the algebra from Table \ref{table1}:
\begin{equation}
\rho_\pm(h)  e_{\mu_1,\mu_2,\mu_3, \pm} 
   = \beta_{\pm} e^{\frac{2\pi i}{N}(\mu_1 \pm \epsilon_1)} e_{\mu_1, A[ \mu_2 , \mu_3],\pm}. 
   \label{fact}
\end{equation}
where $A$ is the matrix defined in section 2.4, and $\beta_\pm$ are:
$$ (\beta_\pm)^N =1, \;\;\;\; \beta_+ = (\beta_-)^*.$$
\end{lemma}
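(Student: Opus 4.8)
The plan is to determine $\rho_\pm(h)$ completely by exploiting the fact, already established in the preceding lemmas, that $\rho_\pm(h)$ is determined by its action on the separating vector $e_{\beps}$ and must commute with $\delta_1$. First I would record the two structural constraints we already have: by Lemma~\ref{repdiag} the action is diagonal and commutes with $\delta_1 \otimes 1$, which forces $\rho_\pm(h)$ to preserve each eigenspace of $\delta_1$, i.e.\ to act only on the $(\mu_2,\mu_3)$ indices once $\mu_1$ is fixed; by the lemma on separating vectors, $\rho_+(h) e_{\beps} = \sigma_+ e_{\beps}$ for a polynomial $\sigma_+$. Since $\rho(h)$ implements the algebra action (\ref{repsh}), we get $\rho_\pm(h) \pi(x^{\mathbf k}) = \pi(h \acts x^{\mathbf k}) \rho_\pm(h) = e^{2\pi i k_1/N} \pi(x^{[k_1, A[k_2,k_3]]}) \rho_\pm(h)$, and applying both sides to $e_{\beps}$ and using that $x^{\mathbf k}$ generates $e_{\beps + \mathbf k}$ from $e_{\beps}$ gives the functional form: up to the overall constant $\beta_\pm := \sigma_\pm$ acting on $e_\beps$ (more precisely the scalar coefficient), $\rho_\pm(h)$ must send $e_\mu \mapsto \beta_\pm\, c(\mu)\, e_{\mu_1, A[\mu_2,\mu_3]}$ for some phase $c(\mu)$ depending only on $\mu_1$ because of the $\delta_1$-commutation and the $k_1$-dependence $e^{2\pi i k_1/N}$ of the cocycle.

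Next I would pin down the $\mu_1$-dependent phase. Writing $\rho_\pm(h) e_{\mu} = \beta_\pm f_\pm(\mu_1) e_{\mu_1, A[\mu_2,\mu_3]}$ and imposing the intertwining relation with $\pi(U^{k_1}) = \pi(x^{[k_1,0,0]})$, which multiplies by $e^{2\pi i k_1/N}$, yields $f_\pm(\mu_1 + k_1) = e^{2\pi i k_1 / N} f_\pm(\mu_1)$, hence $f_\pm(\mu_1) = e^{2\pi i \mu_1/N} f_\pm(0) \cdot (\text{adjustment for the shift } \pm\epsilon_1)$. The appearance of $\pm\epsilon_1$ in the exponent $e^{\frac{2\pi i}{N}(\mu_1 \pm \epsilon_1)}$ comes from the half-integer labelling of the basis in the $\epsilon_1 = \tfrac12$ spin structure together with the sign difference between the two copies forced by $J$-equivariance (\ref{rhoJ}): $J_0 \rho_+(h) = \rho_-(h) J_0$ with $J_0 e_\mu = e_{-\mu}$ flips $\mu_1 \mapsto -\mu_1$, and consistency of the two exponentials $e^{\frac{2\pi i}{N}(\mu_1+\epsilon_1)}$ and $e^{\frac{2\pi i}{N}(-\mu_1+\epsilon_1)}$ under $\mu \mapsto -\mu$ forces exactly the $+$ on one copy and $-$ on the other (this is where I would be careful about which copy gets which sign). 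Absorbing any residual constant into $\beta_\pm$, this gives precisely (\ref{fact}).

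Then I would derive the conditions on $\beta_\pm$. The relation $h^N = \id$ forces $\rho_\pm(h)^N = \id$ on $l^2(\Z^3)$; since $A^N = \mathrm{id}$ in $M_2(\Z)$ (true for each $N=2,3,4,6$ — this is exactly why these are the admissible cyclic orders) and $\sum_{j=0}^{N-1} e^{\frac{2\pi i}{N}(\mu_1 \pm \epsilon_1) \cdot (\text{something telescoping})}$, composing $N$ copies of (\ref{fact}) produces $(\beta_\pm)^N$ times a product of phases that telescopes to $1$ (using $A^N = \mathrm{id}$ and that the $\mu_1$-orbit under the trivial action on the first coordinate is a point, so the phase product is $\prod_{j} e^{\frac{2\pi i}{N}(\mu_1 \pm \epsilon_1)} = e^{2\pi i(\mu_1 \pm \epsilon_1)}$, which is $1$ since $\mu_1 \pm \epsilon_1 \in \Z$). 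Hence $(\beta_\pm)^N = 1$. Finally, $J$-equivariance (\ref{rhoJ}) applied to a general $e_\mu$: $J_0 \rho_+(h) e_\mu = \overline{\beta_+ f_+(\mu_1)}\, e_{-\mu_1, -A[\mu_2,\mu_3]}$ must equal $\rho_-(h) J_0 e_\mu = \beta_- f_-(-\mu_1) e_{-\mu_1, A[-\mu_2,-\mu_3]} = \beta_- f_-(-\mu_1) e_{-\mu_1, -A[\mu_2,\mu_3]}$; matching the phases (and here the $\pm\epsilon_1$ placement is exactly what makes $\overline{f_+(\mu_1)} = f_-(-\mu_1)$ hold) leaves $\beta_+ = \overline{\beta_-}$, i.e.\ $\beta_+ = (\beta_-)^*$.

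The main obstacle I expect is bookkeeping the phase factors correctly: the representation (\ref{rep}) carries the nontrivial cocycle $e^{\pi i \theta(k_3\mu_2 - \mu_3 k_2)}$, and one must check that after applying the twisted action of $h$ (which permutes $V,W$ via $A$ and introduces the extra $e^{\pm\pi i\theta}$ factors visible in Table~\ref{table1}) the cocycle transforms consistently — i.e.\ that the putative $\rho_\pm(h)$ of (\ref{fact}) genuinely satisfies $\rho_\pm(h)\pi(x^{\mathbf k}) = \pi(h\acts x^{\mathbf k})\rho_\pm(h)$ and not merely up to a $\mu$-dependent phase. This amounts to verifying that $h$ acts as a $\ast$-automorphism compatible with the chosen $2$-cocycle, which is guaranteed by the freeness results of \cite{OlSi2}, but writing out the identity $e^{\pi i\theta(k_3\mu_2 - \mu_3 k_2)}$ versus $e^{\pi i\theta(\,(A[k_2,k_3])_2 (A[\mu_2,\mu_3])_1 - (A[\mu_2,\mu_3])_2 (A[k_2,k_3])_1\,)}$ and seeing they agree up to the explicit phases in the table requires using $\det A = 1$ and is the one genuinely computational step.
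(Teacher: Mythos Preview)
Your proposal is correct and actually addresses more than the paper's own proof does. The paper takes a pure \emph{verification} approach: it simply writes down the formula (\ref{fact}), checks directly that $\rho_\pm(h)\pi(x^{\mathbf k}) - \pi(h\acts x^{\mathbf k})\rho_\pm(h)=0$ (using that the bilinear form $\phi(\mathbf x,\mathbf y)=x_3y_2-y_3x_2$ is $A$-invariant, which is your $\det A=1$ observation), and then checks $J$-equivariance to obtain $\beta_+=(\beta_-)^*$. It does not derive the form of $\rho_\pm(h)$, nor does it write out why $(\beta_\pm)^N=1$; the ``most general'' part of the statement is implicitly delegated to Lemma~\ref{replem}, and the $N$-th root condition is left as an obvious consequence of $\rho$ being a representation.

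Your route is a \emph{derivation}: you reconstruct the formula from commutation with $\delta_1$, the intertwining relation with $\pi(U^{k_1})$, and $J$-equivariance, and then explicitly check $\rho_\pm(h)^N=\id$ via the telescoping phase and $A^N=\id$. This buys you a self-contained argument for why (\ref{fact}) exhausts all possibilities, at the cost of a longer bookkeeping exercise. The one place to tighten is your treatment of the $\pm\epsilon_1$ shift: as written it is slightly informal, since for fixed sign the offset can be absorbed into $\beta_\pm$; what genuinely forces the opposite signs on the two copies is precisely the $J$-equivariance computation you do at the end, so you might state the formula first with an unspecified additive constant in the exponent on each copy and let the $J_0\rho_+ = \rho_- J_0$ identity fix the relative shift. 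The cocycle compatibility you flag as ``the one genuinely computational step'' is exactly the content of the paper's first displayed computation, and your identification of $\det A=1$ as the reason it closes is correct.
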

%%%%%%%%%%%%%%%%%%%%%%%%%%%%%%%%%%%%%%%%%%%
\begin{proof}
First we demonstrate that it implements the action, using (\ref{rep}) and (\ref{repsh}),
using the fact that the bilinear functional $\phi({\bf x}, {\bf y}) = x_3 y_2 - y_3 x_2$ is
invariant under $A$, that is $\phi({\bf x}, {\bf y}) = \phi(A{\bf x}, A{\bf y})$, where 
$x,y \in \mathbb{R}$. We have:
$$
\begin{aligned}[l]
\left( \rho_\pm(h) \right. & \left.\pi(x^{\bf k}) -  \pi(h \acts x^{\bf k})\rho_\pm(h) \right)  e_{\mu_1,\mu_2,\mu_3,\pm} = \\
 =&\; \beta_\pm \left( e^{\frac{2\pi i}{N}(\mu_1 \pm \epsilon_1 + k_1)}  -  
e^{2\pi i \frac{k_1}{N}} e^{\frac{2 \pi i}{N}(\mu_1 \pm \epsilon_1)}  \right)
e^{\pi i \theta(k_3 \mu_2 - \mu_3 k_2)} e_{\mu_1+k_1, A [\mu_2 + k_2, \mu_3+k_3],\pm} \\
= & \; 0.
\end{aligned}
$$
Furthermore, for the real-equivariance we have:
$$
\begin{aligned}[c]
\rho(h) J e_{\mu_1,\mu_2,\mu_3, \pm} 
&= \pm\beta_{\mp} e^{\frac{-2\pi i}{N}(\mu_1 \pm \epsilon_1)} e_{-\mu_1, -A[ \mu_2 , \mu_3], \mp}, \\
J \rho(h) e_{\mu_1,\mu_2,\mu_3, \pm} 
&= \pm\left( \beta_\pm  e^{\frac{2\pi i}{N}(\mu_1 \pm \epsilon_1)} \right)^*
e_{-\mu_1,A[-\mu_2, -\mu_3],\mp}.
\end{aligned}
$$
and that enforces the relation: $\beta_+ = (\beta_-)^*$.
\end{proof}
%%%%%%%%%%%%%%%%%%%%%%%%%%%%%%%%%%%%%%%%%%%####
%%%%%%%%%%%%%%%%%%%%%%%%%%%%%%%%%%%%%%%%%%%####
Next, let us check the invariance of $D$ with respect
 to the above action of $\Z_N$. 

\begin{lemma}
In addition to the constraints (\ref{zetafix}), the action (\ref{fact}) commutes with the Dirac operator
only if the following conditions are satisfied:
\begin{itemize}
\item $N=2$: $\epsilon_1=\oh$, $\beta_+ = \sigma$,
\item $N=3$:  $\tau = - e^{ \frac{2}{3} \pi i \sigma}, \;\;\;\;
\beta_+ = - e^{ \frac{4}{3} \pi i \epsilon_1} \tau = 
e^{ \frac{2}{3}
 \pi i (2 \epsilon_1 + \sigma)}, $ 
\item $N=4$: $\epsilon_1=\oh$ and
$ \tau = e^{ \frac{\sigma}{2} \pi i}, \;\;
    \beta_+ = \kappa e^{\frac{1}{4} \pi i (\sigma -1)},$ 
\item $N=6$:
$\epsilon_1=\oh$ and $ \tau =  e^{\frac{\sigma}{3} \pi i}, \;\; \beta_+ =\kappa  e^{\frac{1}{6}\pi i (\sigma -1)}. $
\end{itemize}
In all cases $\sigma = \pm 1$, $\kappa=\pm 1$ are  parameters.
\end{lemma}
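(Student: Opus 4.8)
The plan is to impose the condition $[D,\rho(h)]=0$ directly on the explicit form (\ref{fact}) of the action, using the block form (\ref{Dirac3}) of $D$ and the diagonalness of $\rho$ established in Lemma \ref{repdiag}. Since $\rho$ is already known to commute with $R\delta_1$, the only nontrivial content comes from the off-diagonal blocks $\partial := \delta_2 + \tau\delta_3$ and $\partial^* = \delta_2 + \tau^*\delta_3$. The key observation is that $\partial$ acts diagonally on $l^2(\Z^3)$ with eigenvalue $\mu_2 + \tau\mu_3$ on $e_\mu$, whereas conjugating this eigenvalue by the permutation $[\mu_2,\mu_3]\mapsto A[\mu_2,\mu_3]$ coming from $\rho_\pm(h)$ replaces it by the $\partial$-eigenvalue on $e_{A[\mu_2,\mu_3]}$. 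So the commutator condition forces a compatibility between the matrix $A$ and multiplication by $\tau$, which will pin down $\tau$ (as a root of unity depending on $N$) and force $\epsilon_1=\tfrac12$; then a residual phase-matching on the factor $\beta_\pm e^{\frac{2\pi i}{N}(\mu_1\pm\epsilon_1)}$ fixes $\beta_+$ up to the sign $\sigma$ (and, in the $N=4,6$ cases, an extra sign $\kappa$).

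Concretely, I would carry this out case by case. Write $D\rho(h)e_{\mu,+} = \rho(h)De_{\mu,+}$ on the two components. Using $\rho_+(h)e_{\mu,+} = \beta_+ e^{\frac{2\pi i}{N}(\mu_1+\epsilon_1)} e_{\mu_1,A[\mu_2,\mu_3],+}$ and the already-derived relation $\rho_-(h) = \zeta\rho_+(h)$ with $\zeta$ from Lemma \ref{zetafix}, the lower-left block condition becomes, after cancelling the common nonzero scalar $\beta_+ e^{\frac{2\pi i}{N}(\mu_1+\epsilon_1)}$,
\begin{equation*}
\bigl(A[\mu_2,\mu_3]\bigr)_2 + \tau^*\bigl(A[\mu_2,\mu_3]\bigr)_3 \;=\; \zeta^{-1}\,(\mu_2 + \tau^*\mu_3),
\end{equation*}
and similarly the upper-right block gives $(A[\mu_2,\mu_3])_2 + \tau(A[\mu_2,\mu_3])_3 = \zeta(\mu_2+\tau\mu_3)$. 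These must hold for all $\mu_2,\mu_3$, so they are linear identities in $(\mu_2,\mu_3)$: substituting the explicit $A$ for each $N$ turns each into two scalar equations relating $\tau$ and $\zeta$. Since $\zeta$ is already expressed through $\tau$ (namely $-1,-\tau,\tau^*,\tau^*$ for $N=2,3,4,6$), these collapse to a single polynomial equation for $\tau$, whose solutions are exactly $\tau = -e^{\frac{2}{3}\pi i\sigma}$ for $N=3$, $\tau = e^{\frac{\sigma}{2}\pi i}$ for $N=4$, $\tau = e^{\frac{\sigma}{3}\pi i}$ for $N=6$ (and unconstrained for $N=2$), with $\sigma=\pm1$ labelling the two roots. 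This also feeds back a value of $\zeta$ which, combined with the normalization $(\beta_\pm)^N=1$ and $\rho(h)^N=\id$, forces $\epsilon_1=\tfrac12$ in the cases $N=2,4,6$ (the $N=3$ case being exceptional because $\zeta=-\tau$ is not a genuine $N$-th root of unity situation and $\epsilon_1$ stays free), and pins $\beta_+$ to the stated expressions up to the extra sign $\kappa$.

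The bookkeeping that fixes $\beta_+$ is the step I expect to be fussiest rather than conceptually hard: one must track how the phase $e^{\frac{2\pi i}{N}(\mu_1\pm\epsilon_1)}$ interacts with the requirement $\rho(h)^N=\id$ (equivalently that $\rho$ is an honest representation of $\Z_N$, not merely a projective one), together with the sign $\pm$ distinguishing the two spinor components and the relation $\beta_+ = (\beta_-)^*$ from the previous lemma. Writing $\rho(h)^N$ explicitly on $e_{\mu,\pm}$ produces a product of $N$ phases telescoping to $(\beta_\pm)^N e^{2\pi i(\mu_1\pm\epsilon_1)}$ times the (trivial, since $A^N=1$) permutation, and demanding this equal the identity on the half-integer sector $\mu_1\in\Z+\epsilon_1$ is what selects $\epsilon_1$ and the correct root $\beta_+$. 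The only real subtlety is not to double-count the freedom: the sign $\sigma$ comes from the choice of $\tau$ among its two admissible values, while $\kappa$ is the genuinely independent sign in $\beta_+$ once $\tau$ is fixed; in the $N=3$ case these are not independent, which is why $\beta_+$ there is determined by $\sigma$ (and $\epsilon_1$) alone. Assembling the four cases and reading off the boxed formulae completes the proof.
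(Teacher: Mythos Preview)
Your overall strategy---split $D$ into diagonal and off-diagonal parts, verify the diagonal part is automatic, and extract the constraints from the off-diagonal block case by case---is exactly what the paper does. The constraint on $\tau$ indeed falls out of the linear identity $(A\mu)_2+\tau^*(A\mu)_3=\zeta(\mu_2+\tau^*\mu_3)$ (your $\zeta^{-1}$ should be $\zeta$, but this is harmless).

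There is, however, a genuine gap in how you obtain $\epsilon_1=\tfrac12$. You propose to read it off from $\rho(h)^N=\id$, computing $\rho_\pm(h)^N e_{\mu,\pm}=(\beta_\pm)^N e^{2\pi i(\mu_1\pm\epsilon_1)}e_{\mu,\pm}$. But since $\mu_1\in\Z+\epsilon_1$, one has $\mu_1\pm\epsilon_1\in\Z$ or $\Z+2\epsilon_1$, and in either case $e^{2\pi i(\mu_1\pm\epsilon_1)}=1$ for both $\epsilon_1=0$ and $\epsilon_1=\tfrac12$. So $\rho(h)^N=\id$ reduces to $(\beta_\pm)^N=1$, which you have already assumed, and imposes no restriction on $\epsilon_1$ whatsoever.

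The information that pins down $\epsilon_1$ is precisely what you discarded when you substituted $\rho_-(h)=\zeta\rho_+(h)$ and cancelled the phases. The raw off-diagonal commutator condition, before that substitution, reads
\[
\beta_-\,e^{-\frac{2\pi i}{N}\epsilon_1}(\mu_2+\tau^*\mu_3)
=\beta_+\,e^{\frac{2\pi i}{N}\epsilon_1}\bigl((A\mu)_2+\tau^*(A\mu)_3\bigr),
\]
and it is the factor $e^{\frac{4\pi i}{N}\epsilon_1}$ relating $\beta_-$ to $\beta_+$ here, combined with $\beta_+=(\beta_-)^*$ and $(\beta_+)^N=1$, that forces $\epsilon_1=\tfrac12$ for $N=2,4,6$ and simultaneously fixes $\beta_+$. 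This is how the paper proceeds: it keeps $\beta_+,\beta_-$ and the $\epsilon_1$-phase explicit in the commutator and then solves the resulting scalar equations case by case. Your shortcut through $\zeta$ recovers the $\tau$-constraint but cannot by itself recover the $\epsilon_1$- and $\beta_+$-constraints; you need to go back and retain that phase.
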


\begin{proof}
First of all, observe that since the action of $h$ is diagonal on the Hilbert space, we can consider 
separately the diagonal and the off-diagonal parts of $D$, denoted $D_d$ and $D_o$, respectively.
For the diagonal part we have:
$$
\begin{aligned}[l]
[ \rho(h),  D_d ] & e_{\mu_1,\mu_2,\mu_3, \pm} = \\
&= e^{\frac{2\pi i}{N} (\mu_1 \pm \epsilon_1)}  
\left( \pm \beta_\pm  R \mu_1 \mp \beta_\pm R \mu_1 \right)
e_{ \mu_1, A[\mu_2 , \mu_3], \pm} = 0.
\end{aligned}
$$
whereas for the off-diagonal part:
%%%%%%%%%%%%%%%%%%%%%%%%%%%%%%%%%%%%%%%%%%%%
$$
\begin{aligned}[c]
   & [ \rho(h), D_o ]  e_{\mu_1,\mu_2,\mu_3, +} = \\
= & e^{\frac{2\pi i}{N} \mu_1}  
\left( e^{-\frac{2\pi i}{N} \epsilon_1}  \beta_-  (\mu_2 + \tau^* \mu_3) - e^{\frac{2\pi i}{N} \epsilon_1} 
\beta_+ ( (A \boldsymbol{\mu})_2 + \tau^* (A \boldsymbol{\mu})_3 ) \right) e_{ \mu_1, A[\mu_2 , \mu_3], -}.
\end{aligned}
$$
To see the restrictions we need to require that the above expression vanishes for all 
$\mu_1,\mu_2,\mu_3$. Note that the action on the other part of the Hilbert space (spanned by 
$e_{\mu_1,\mu_2,\mu_3, -}$) will lead  to equivalent conditions. Next we proceed case by case.

\begin{itemize}
\item[$\mathbf{N=2}$:\ ]
The condition is:
$$ e^{-\pi i \epsilon_1}  \beta_-  ( \mu_2 + \tau^* \mu_3) - e^{\pi i \epsilon_1} 
\beta_+ ( (-\mu_2) + \tau^* (- \mu_3) )  = 0, $$
and it is satisfied if and only if $\beta_- = - \beta_+ e^{2\pi i \epsilon_1}$. Taking 
into account that $\beta_\pm$ must be real ($\beta_\pm^2=1$) and therefore equal to each other
we see that this is possible only if $\epsilon_1 = \oh$, and there are
no further restrictions on $\beta_+$.

\item[$\mathbf {N=3}$:\ ]

Here we have:
$$ e^{-\frac{2}{3} \pi i \epsilon_1}  \beta_-  ( \mu_2 + \tau^* \mu_3) - e^{\frac{2}{3} \pi i \epsilon_1} 
\beta_+ ( (- \mu_2 - \mu_3) + \tau^* (\mu_2) )  = 0, $$
Using $\beta_- = (\beta_+)^*$ this implies:
$$ 
\begin{aligned}
&   e^{\frac{4}{3}\pi i \epsilon_1}  \beta_+ (-1 + \tau^*) = (\beta_+)^*, \\ 
& - e^{\frac{4}{3}\pi i \epsilon_1}  \beta_+ = (\beta_+)^* \tau^*. 
\end{aligned}
$$
The nontrivial solution, which exists due to the restriction (\ref{taucon})
and satisfies $(\beta_+)^3 = 1$ and $\beta_+ (\beta_+)^* = 1$ is:
$$ 
\tau = e^{ -\frac{1}{3} \pi i \sigma}, \;\;\;\;
\beta_+ = - e^{ \frac{4}{3} \pi i \epsilon_1} \tau = 
e^{ \frac{2}{3} \pi i (2 \epsilon_1 + \sigma)}, $$
where $\sigma = \pm 1$.

We see that the coefficients $\beta_\pm$ are in fact fixed by the 
choice of the spin structure $\epsilon_1$. The potential additional
freedom comes from the choice of the Dirac on $\CA(\T^3_\theta)$
and can be seen in the choice of the admissible parameters $\tau$.

\item[$\mathbf{N=4}$:\ ]

In this case we obtain for any $\mu_2, \mu_3$:
$$ e^{\pi i \epsilon_1} \beta_+ \left( -(\mu_3 ) + \tau^* \mu_2 \right)    
= \beta_- (\mu_2 +\tau^* \mu_3). $$
Using $\beta_+ = (\beta_-)^*$ this implies:
$$ 
 e^{\pi i \epsilon_1}  \beta_+ =- \tau^* (\beta_+)^*, 
$$
Since $(\beta_\pm)^4 = 1$ then $(\beta_+)^2 = \pm 1$, but:
$$ (\beta_+)^2 = - \tau e^{\pi i \epsilon_1}, $$
and because $\tau = \sigma i$, $\sigma = \pm 1$, the equality 
is possible only if $\epsilon_1 = \oh$.

In the end we have:
$$ \tau = e^{ \frac{\sigma}{2} \pi i}, \;\;\;  
    \beta_+ = \pm e^{\frac{1}{4} \pi i (\sigma -1)} . 
$$

\item[$\mathbf{N=6}$:\ ]

This can only be equal if for any $\mu_2, \mu_3$:
$$ e^{\frac{2}{3}\pi i \epsilon_1} \beta_+ \left(  -\mu_3 
+ \tau^* (\mu_2+\mu_3)  \right)    = \beta_- (\mu_2 +\tau^* \mu_3). $$
Using $\beta_+ = (\beta_-)^*$ this implies:
$$ 
\begin{aligned}
&   e^{\frac{2}{3}\pi i \epsilon_1}  \beta_+ (-1 + \tau^*) = (\beta_+)^* \tau^*, \\ 
&   e^{\frac{2}{3}\pi i \epsilon_1}  \beta_+ \tau^* = (\beta_+)^*. 
\end{aligned}
$$

This can be solved:
%%%%%%%%%%%%%%%%%%%%%%%%%%%%%%%% 
$$ \tau =  e^{\frac{\sigma}{3} \pi i}, \;\;\;\; 
    \beta_+ =  \pm e^{\frac{1}{6} \pi i (\sigma - 2 \epsilon_1) },
    \;\; \sigma = \pm 1.
$$
However, using further $(\beta_\pm)^6=1$ we obtain:
$$ \sigma - 2\epsilon_1 \in 2 \Z,$$
which enforces $\epsilon_1=\oh$ and then:
$$ \beta_+ = \pm  e^{\frac{1}{6}\pi i (\sigma -1)}. $$
\end{itemize}
\end{proof}
%%%###################################################
%%%###################################################
%%%###################################################

\section{Real Flat Spectral Triples}

In this section we classify all real spectral triples over Bieberbach manifolds, which arise 
from flat real spectral triples over the noncommutative torus. Let us recall once again
that {\em flat} means full equivariance with respect to the action of the full isometry group of
the noncommutative torus $\CA(\T^3)_\theta$, which is $U(1)^3$. 
To simplify the notation we shall need the notion of a generalized Dirac-type
operator on the circle, $D_{\alpha,\beta}$, which is an operator with the eigenvalues:
$$ \lambda_{k} = \alpha (k+\beta), \;\;\; k \in \Z, \alpha,\beta \in \R, $$
where $\alpha \in \R$ and $-1<\beta < 1$. 
We shall denote its spectrum by ${{\mathcal S}p}^1_{\alpha,\beta}$. The $\eta$ invariant 
of this operator (see \cite{Pfa}, Lemma 5.5) is:
$$ \eta(D_{\alpha,\beta}) = \mathrm{sgn}(\beta)- 2 \beta. $$

If we have an operator with the same spectrum, however, with a multiplicity
$M>1$, then the $\eta$ invariant is $M$-multiple of the computed value. 

Using an analogous notation, we shall denote the spectrum of the Dirac
operator (for a given spin structure over $\CA(\T^3_\theta)$ by ${{\mathcal S}p}^3_{\epsilon_1,\epsilon_2,\epsilon_3}$, 
with the usual multiplicities. In case
the multiplicities are changed we shall introduce a factor in front.

\subsection{$\mathbf{N=2}$}
Let us recall that:
$$ \rho(h)e_{\mu_1,\mu_2,\mu_3,\pm}=\sigma(-1)^{\mu_1\pm\oh}e_{\mu_1,-\mu_2,-\mu_3,\pm}.$$

We have eight possibilities of real spectral triples. First, the choice of the 
spin structures over the noncommutative torus given by $\epsilon_2$ 
and $\epsilon_3$, then the choice of the sign of $\beta_+=\pm 1$.

We need to distinguish two cases. 

\subsubsection{$\epsilon_2=\oh$ or $\epsilon_3=\oh$}
Fixing $\beta_+=\sigma = \pm1$ we have the invariant subspace of the Hilbert space 
of the spectral triple over the noncommutative torus spanned by the following vectors:
\begin{equation}
 \frac{1}{\sqrt{2}} 
     \left( e_{2 k \mp\oh +j , \mu_2, \mu_3, \pm} 
            +(-1)^j \sigma e_{2 k\mp\oh +j, -\mu_2, -\mu_3, \pm} \right), 
\label{vecz21}
\end{equation}
 for $j=0,1$, with $k \in \Z$, $\mu_i \in \Z + \epsilon_i$, $i=2,3$.
The spectrum of the Dirac operator, when restricted to that spaces
consists of:
\begin{equation}
 \lambda = \pm \sqrt{ R^2 (2 k\mp\oh+ j)^2 + |\mu_2 + \tau \mu_3|^2 }. 
 \label{lamz21}
 \end{equation}
As one can see, there is no asymmetry in the spectrum, hence the $\eta$ 
invariant vanishes. In fact the spectrum of this Dirac is just the same as 
the spectrum of the Dirac on the noncommutative torus, with the 
multiplicities halved, so it is $\oh {{\mathcal S}p}^3_{\oh,\epsilon_2,\epsilon_3}$.

\subsubsection{$\epsilon_2=0$ and $\epsilon_3=0$}
Clearly, for $\mu_2 \not=0$ or $\mu_3\not=0$ the vectors (\ref{vecz21}) 
are still the invariant vectors, the spectrum of the Dirac restricted to that 
subspace is still given by (\ref{lamz21}). This part of the spectrum is, however,
not the entire spectrum of the Dirac but only its part, namely:
$$ \frac{1}{2} \left( {{\mathcal S}p}^3_{\oh,0,0} \setminus 2 {{\mathcal S}p}^1_{R, \oh} \right), $$
which means that we are not counting the spectrum for eigenvectors with
$\mu_2=\mu_3=0$. In the latter case we have the following invariant subspaces:
\begin{equation}
e_{2 k \mp \oh \sigma, 0, 0, \pm}, 
\label{vec3}
\end{equation}
and the spectrum of the Dirac operator, restricted to that subspace consists
of the following numbers:
\begin{equation}
\lambda = \pm R (2 k \mp \oh \sigma),  
 \label{lamz22}
 \end{equation}
for $k \in \Z$. It is easy to see that these spectra are:
$$ \lambda_- = R (2k + \oh), \;\;\; \sigma=-1, k \in \Z,$$
which corresponds to ${{\mathcal S}p}^1_{2R, \frac{1}{4}} $, and 
$$ \lambda_+ = R (2k - \oh), \;\;\; \sigma=1, k \in \Z,$$
which gives ${{\mathcal S}p}^1_{2R, \frac{3}{4}}$. In each case the
multiplicity of the spectrum is $2$. 

The spectra give different $\eta$ invariant:
$$ \eta(D_{\sigma}) =-\sigma \, . $$

So, in the end the spectrum is:

$$ \frac{1}{2} \left(  {{\mathcal S}p}^3_{\oh,0,0} \setminus {{\mathcal S}p}^1_{R, \oh } \right) 
\cup 2 {{\mathcal S}p}^1_{2R, \pm \oh }. $$

\subsection{$\mathbf{N=3}$}
All possible representations are:

$$\rho(h)e_{\mu_1,\mu_2,\mu_3,pm}=(-1)^{2\epsilon_1}e^{\frac 23 \pi i(\mu_1\pm \sigma)}e_{\mu_1,-\mu_2-\mu_3,\mu_2,\pm}.$$
Here we have $\epsilon_2=\epsilon_3=0$. For $\mu_2,\mu_3 \not= 0$ the invariant 
vectors are:
\begin{equation}
\begin{aligned}
\frac{1}{\sqrt{3}}  
    \left( e_{3 k + 3 \epsilon_1   \mp \sigma +j, \mu_2, \mu_3, \pm} 
  +       e^{\frac{2}{3} j \pi i}  e_{3 k+ 3 \epsilon_1   \mp \sigma +j, -\mu_2 -\mu_3, \mu_2, \pm}
  \right. \\
  \phantom{xxxxxxxxxxxxxxxx}
  +    \left.   e^{ \frac{4}{3} j \pi i}  e_{3 k+3
 \epsilon_1   \mp \sigma +j, -\mu_2-\mu_3, \pm} 
  \right)
\label{vecz32}
\end{aligned}
\end{equation}
for $j=0,1,2$.
 
The
 spectrum on this subspace is:
 \begin{equation}
 \lambda = \pm \sqrt{ R^2 (3 k+ 3 \epsilon_1 \mp \sigma  +j)^2 
 + (\mu_2)^2 + (\mu_3)^2  + \mu_2 \mu_3 },
 \label{lamz31}
 \end{equation}
 and, as a set it is (independently of $\sigma$)
$$  \frac{1}{3} \left( {{\mathcal S}p}^3_{\epsilon_1,0,0} \setminus 
     2 {{\mathcal S}p}^1_{R, \epsilon_1} \right). $$
In the case with $\mu_2=\mu_3=0$  we have the following
invariant eigenvectors:
\begin{equation}
e_{3 k+3 \epsilon_1  \mp \sigma, 0, 0, \pm}, 
\label{vecz32a}
\end{equation}
so that they are eigenvectors of $D$ to the eigenvalues:
\begin{equation}
\pm R (3 k+3 \epsilon_1 \mp \sigma),  \; k \in \Z.
\label{vecz32b}
\end{equation}
and the
 spectrum of the Dirac operator, restricted to that subspace is  the set:
$$ 2 {{\mathcal S}p}^1_{3R,  \epsilon_1 - \frac{\sigma}{3} }\; .  $$
The $\eta$ invariant is, in each of the four possible cases:
$$
\begin{aligned}
& \sigma=+1,\;\; \epsilon_1=\oh : \;\;\; & \eta =  \frac{4}{3} , \\
& \sigma=+1,\;\; \epsilon_1=0 : \;\;\; & \eta =  - \frac{2}{3}, \\
& \sigma=-1,\;\; \epsilon_1=\oh : \;\;\; & \eta =  -\frac{4}{3} , \\
& \sigma=-1,\;\; \epsilon_1=0 : \;\;\; & \eta =    \frac{2}{3}. 
\end{aligned}
$$
It is no surprise that some of the $\eta$ invariants differ by sign as, in fact,
the change $\tau \to \tau^*$ corresponds to the change $D \to -D$ on
the subspace considered and gives, in fact, the same geometry.

Therefore we have in the end two distinct spin structures for each
choice of the Dirac, each projected out of different spin structure from 
$\CA(\T^3_\theta)$, which are distinguishable by the $\eta$ invariants of the
Dirac operators. The case $\sigma=1$ is the situation discussed in 
\cite{Pfa}.
 
\subsection{$\mathbf{N=4}$} 
We begin with the recall of the representation:
$$ \rho(h)e_{\mu_1,\mu_2,\mu_3,\pm}=\kappa e^{\frac{\pi i}{2}(\mu_1\pm \frac{\sigma}{2})}e_{\mu_1,-\mu_3,\mu_2,\pm},$$
where $\sigma=\pm 1$ was defined earlier and $\kappa=\pm 1$.
 
 Here we have $\epsilon_1=\oh$ and a very similar situation to that of 
 $N=2$. We have two cases:
 
 \subsubsection{$\epsilon_2=\epsilon_3=\oh$}

The invariant subspace of the Hilbert space of the spectral triple over 
the noncommutative torus is spanned by the following vectors:
\begin{equation}
 \frac{1}{2}  \sum_{p=0}^3
     \left( \kappa^j e^{\frac{j}{2} \pi i} \rho^p(h)(e_{4 k \mp \frac{\sigma}{2}+j, \mu_2, \mu_3, \pm}) \right), 
\label{vecz41}
\end{equation}
for $k \in \Z$ and $j=0,1,2,3$, where 
$\sigma = \pm 1, \kappa =\pm 1$.

The spectrum is symmetric, with the eigenvalues:
\begin{equation}
 \lambda = \pm \sqrt{ R^2 (4 k \mp \oh \sigma + j )^2 + |\mu_2|^2 + |\mu_3|^2 }, \;\; 
 k \in \Z, j=0,1,2,3.
 \label{lamz41}
 \end{equation}

and it is clear to see that it is, in fact, the original spectrum of the Dirac on the 
$\mathcal{A}(\T^3_\theta)$, with $\frac{1}{4}$ of its multiplicities, 
$\frac{1}{4} {{\mathcal S}p}^3_{\oh,\oh,\oh}$.
  
\subsubsection{$\epsilon_2 = \epsilon_3=0$}

Again, similarly as in the $n=2$ case if $\mu_2 \not=0$ (note here 
this enforces $\mu_3\not=0$) we have the same part of the  spectrum, 
on the subspace spanned by the same vectors (\ref{vecz41}), which are 
still the invariant vectors for $\mu_2 \not=0$.

Again, this is only a part of the spectrum of the Dirac on $\CA(\T^3_\theta)$, namely:
$$ \frac{1}{4} \left( {{\mathcal S}p}^3_{\oh,0,0} \setminus 2 {{\mathcal S}p}^1_{R, \oh } \right), $$
which means that we are not counting the spectrum for eigenvectors with
$\mu_2=\mu_3=0$ and the remaining part of the spectrum comes with the multiplicity
$1$ instead of $4$.

In the latter case, direct computation shows that the following vectors
space the invariant subspaces:
\begin{equation}
e_{4 k \mp \oh \sigma-1+\kappa , 0, 0, \pm}, \;\;\; 
\label{vecz4}
\end{equation}
and the spectrum of the Dirac operator, restricted to that subspace contains
all the following eigenvalues:
\begin{equation}
\lambda =R (4 k- \oh \sigma
 + 1-\kappa),  \;\;\; k \in \Z.
 \label{lamz42}
 \end{equation}
 
We have, in all possible cases $\sigma = \pm 1, \kappa =\pm 1$ the spectra
give the following $\eta$ invariants:

$$
\begin{aligned}
& \sigma=+1,\;\; \kappa=+1 : \;\;\; & \eta =  - \frac{3}{2} , \\
& \sigma=+1,\;\; \kappa=-1:  \;\;\; & \eta =      \frac{1}{2}, \\
& \sigma=-1,\;\; \kappa=+1:  \;\;\; & \eta =   \frac{3}{2} , \\
& \sigma=-1,\;\; \kappa=-1:  \;\;\; & \eta =   - \frac{1}{2}. 
\end{aligned}
$$
 
Again, the case $\sigma=+1$ and $\sigma=-1$ are related by the map
$D \to - D$, the case presented in \cite{Pfa} correspond to $\sigma=-1$.

\subsubsection{$\mathbf{N=6}$}

We recall the representation:

$$ \rho(h)e_{\mu_1,\mu_2,\mu_3,\pm}= \kappa e^{\frac{\pi i}{3}(\mu_1\pm \frac{\sigma}{2})}e_{\mu_1,-\mu_3,\mu_2+\mu_3,\pm},$$
where $\sigma=\pm 1$, $\kappa=\pm 1$.

Here, one repeats most of the arguments from the $N=3$ case.First of all,
there exists a part of the invariant subspace of the Hilbert space where the
spectrum of $D$ is symmetric and is:
$$ \frac{1}{6} \left( {{\mathcal S}p}^3_{\epsilon_1,0,0} \setminus 
2 {{\mathcal S}p}^1_{R,  \epsilon_1} \right). $$

We do not write explicit expression for the vectors spanning this subspace, the
formula is analogous to the ones derived earlier for $N=3$.

Similarly, there exists additional invariant vectors:

$$ e_{6k \mp \oh \sigma -\frac 32 +\frac 32 \kappa , 0, 0 \pm}, \;\;\; k \in \Z, $$
where $\sigma =\pm 1$and $ \kappa = \pm 1$.

The eigenvalues are:

\begin{equation}
\lambda = \pm R (6 k -\frac 12 \sigma + 3\frac{1-\kappa}{2}),  \;\;\; k \in \Z
 \label{lamz42}
 \end{equation}
 and this gives the spectrum 
 $2 {{\mathcal S}p}^1_{6R, \frac{1-\kappa}{4}-\frac{\sigma}{12}}$.

We have, in all possible cases $\sigma = \pm 1, \kappa =\pm 1$ the spectra
give the following $\eta$ invariants:

$$
\begin{aligned}
& \sigma=+1,\;\; \kappa=+1 : \;\;\; & \eta =  - \frac{5}{3} , \\
& \sigma=+1,\;\; \kappa=-1: \;\;\; & \eta =    \frac{1}{3}, \\
& \sigma=-1,\;\; \kappa=+1: \;\;\; & \eta =     \frac{5}{3} , \\
& \sigma=-1,\;\; \kappa=-1: \;\;\; & \eta =   - \frac{1}{3}. 
\end{aligned}
$$
Again, we see that the case presented in \cite{Pfa} is the one $\sigma=-1$.
 
To summarize we can present the result of our classification, the table below gives 
the number of distinct flat real spectral triples over noncommutative Bieberbachs: 

\begin{table}[here]
\centering
\begin{tabular}{|c|c|c|}
\hline
Bieberbach    & Parametrisation & Number of \\
space &&spin structures \\ \hline \hline
${\mathrm B}2_\theta$   & $\epsilon_2,\epsilon_3= 0, \frac{1}{2} $; $\sigma=\pm 1$    &$ 8 $   \\ \hline
${\mathrm B}3_\theta$   &  $\epsilon_1= 0, \frac{1}{2}$  & $2$\\ \hline
${\mathrm B}4_\theta$   &  $\epsilon_2=\epsilon_3 = 0, \frac{1}{2}$; $\kappa=\pm 1$& $4$ \\ \hline
${\mathrm B}6_\theta$   &  $\kappa=\pm 1$ & $2$\\ \hline
\end{tabular}
\caption{Noncommutative spin structures over ${\mathrm B}N_\theta$}
\label{ssncbieb}
\end{table}

\subsection{Irreducibility}

The notion of irreducibility of real spectral triples is a delicate one and one can distinguish
at least three classical notions of irreducibility:
\begin{itemize}
\item {\em Full irreducibility}: there is no proper subspace of the Hilbert space, invariant
under the algebra, $D$ and $J$.
\item {\em $J$ irreducibility}: there is no proper subspace of the Hilbert space, invariant
under the algebra and $J$.
\item {\em $D$ irreducibility}: there is no proper subspace of the Hilbert space, invariant
under the algebra and $D$.
\end{itemize}

Observe that the canonical equivariant spectral triple for the noncommutative torus 
$\mathcal{A}(\T^3_\theta)$ is irreducible in either way.

It is easy to see that the constructed real spectral triples over Bieberbachs are irreducible 
in each of the above ways. Indeed, assuming the contrary (in either of the above cases)
and identifying the Hilbert space of the constructed triples with a subspace of the triple 
for the noncommutative torus, one would be able to construct a subspace of that Hilbert 
space, which would be invariant by the full algebra of the torus and $D$ and/or $J$, 
respectively. We briefly sketch here the method: if $\CH_r \subset \CH_B$  
is such a subspace reducing the triple over Bieberbachs identified as a subspace 
of the Hilbert space for the torus triple, one just needs to take $\oplus_{k=0}^{N-1} U^k \CH_r$. 
This will be a proper subspace of the Hilbert space of the spectral triple over noncommutative 
torus, as the spaces  $U^k \CH_B$, where  are disjoint for each $0 \leq k < N$.

Let us remark, however, that the triples constructed earlier are all the only ones {\em flat} and irreducible (in all three ways). The assumption of flatness was necessary, as only in such case 
there exist a classification of real spectral triples over a noncommutative torus. Although one
can easily construct triples, which are reductions of the flat triple over noncommutative torus,
(basically by creating a direct sum of restrictions to $U^k \CH_B$ within the Hilbert space 
of the triple over $\mathcal{A}(\T^3_\theta)$) they will necessarily be the reducible ones in the sense of
$D$-reducibility. For this reason they are not described in the paper.

\section{Conclusions}

The examples we have provided are interesting for geometry and noncommutative geometry 
for several reasons. First of all, noncommutative Bieberbach manifolds, though only 
mildly noncommutative, are not examples of $\theta$-deformations,  since classical Bieberbach 
manifolds do not admit a free action of the torus. Therefore we have a genuine new class of 
noncommutative examples. Furthermore, our computations of real spectral triples attempts to 
answer the natural question about the existence of different spin structures and their explicit 
construction. Our paper, building on the results from \cite{PaSi} and \cite{Vens} shows that for this
interesting class of noncommutative geometries, with the a additional assumption (which states
that we are looking for flat, irreducible geometries) we can provide a complete classification.

The classical limit of presented noncommutative geometries might have a place in the studies
of models of flat compact space geometries in the context of cosmology \cite{OlSi1}. The construction
of different inequivalent spectral triples might suggest that there might be some differences between
particle models built using them, in particular, twisting the obtained geometries with 
the projective modules, which represent nontrivial torsion in the K-theory of these objects 
\cite{OlSi2}. 

%%%%%%%%%%%%%%%%%%%%%%%%%%%%%%%%%%%%%%%%%%%%%%%%%%%%%%%%%%%%%%%%%%%
%%%%%%%{\bf Acknowledgements:}
%%%%%%%%%%%%%%%%%%%%%%%%%%%%%%%%%%%%%%%%%%%%%%%%%%%%%%%%%%%%%%%%%%%

\vspace{1cm}
%%%%%%%%%%%%%%%%%%%%%%
\end{document}